\documentclass{amsart}
\usepackage{amssymb}
\usepackage[top= 1.5in, bottom=1.5in, left=1.5in, right=1.5in]{geometry}
\usepackage{mathrsfs}
\usepackage{graphicx}
\usepackage{comment}
\allowdisplaybreaks

\newcommand{\be}{\begin{equation}}
\newcommand{\ee}{\end{equation}}
\newcommand{\ba}{\begin{align}}
\newcommand{\ea}{\end{align}}

\newtheorem{theorem}{Theorem}[section]
\newtheorem{lemma}[theorem]{Lemma}
\newtheorem{proposition}[theorem]{Proposition}
\newtheorem*{proposition*}{Proposition}
\newtheorem{corollary}[theorem]{Corollary}
\newtheorem*{theorem*}{Theorem}
\newtheorem*{corollary*}{Corollary}
\newtheorem*{cor*}{Corollary}

\theoremstyle{definition}

{\begin{list}{}{%
\settowidth{\labelwidth}{\textsf{{\it #1.}}}%
\setlength{\labelsep}{4mm}%
\setlength{\leftmargin}{\labelwidth}%
\addtolength{\leftmargin}{\labelsep}%
}}%
{\end{list}}

\def\beq{\begin{equation}}\def\enq{\end{equation}}

\def\myalpha{R(q)}
\def\mybeta{R(q^2)}
\def\mygamma{R(q^3)}

{\begin{list}{}{%
\settowidth{\labelwidth}{\textsf{{\it #1.}}}%
\setlength{\labelsep}{2mm}%
\setlength{\leftmargin}{\labelwidth}%
\addtolength{\leftmargin}{\labelsep}%
\addtolength{\leftmargin}{4mm}%
\setlength{\itemsep}{6pt}%
\setlength{\listparindent}{0pt}%
\setlength{\topsep}{3pt}%
}}%

\usepackage{color}
\usepackage[normalem]{ulem}
\usepackage{soul}

\usepackage{xcolor}

\newcommand{\tbar}{\overline{t}}

\begin{document}

\title[Congruences for overpartitions with restricted odd differences]{Elementary proofs of congruences modulo 5 for overpartitions with restricted odd differences}

\author[B. Paudel]{Bishnu Paudel}

\author[J. Sellers]{James A. Sellers}

\author[H. Wang]{Haiyang Wang}

\address{Mathematics and Statistics Department\\
         University of Minnesota Duluth\\
         Duluth, MN 55812, USA}
\email{bpaudel@d.umn.edu, jsellers@d.umn.edu, wan02600@d.umn.edu}

\begin{abstract}
In 2015, Bringmann, Dousse, Lovejoy, and Mahlburg defined the function $\overline{t}(n)$ to be the number of overpartitions of weight $n$ where (i) the difference between two successive parts may be odd only if the larger part is overlined and (ii) if the smallest part is odd then it is overlined.  In their work, they proved that $\overline{t}(n)$ satisfies an elegant congruence modulo 3.  Since then, a number of authors have studied arithmetic properties satisfied by $\overline{t}(n)$.  In particular, in 2023, Hanson and Smith utilized the theory of modular forms to prove the following two congruences modulo 5:  For all $n\geq 0$, 
\begin{align*}
\overline{t}(80n+40)\equiv \overline{t}(80n+60)\equiv 0 \pmod{5}.
\end{align*}
Our goal in this work is to provide a truly elementary proof of this pair of congruences.  
\end{abstract}

\maketitle

\section{Introduction}\label{sec.intro}
A partition $\lambda$ of a positive integer $n$ is a nonincreasing sequence of positive numbers $\lambda_1, \lambda_2, \dots, \lambda_r$ such that $\lambda_1 + \lambda_2 + \dots + \lambda_r = n$.  An overpartition of $n$ is a partition of $n$ wherein the last instance of any given part size may (or may not) be overlined.  For example, the partitions of $n=4$ are given by 
$$
4, \ \ \ 3+1, \ \ \ 2+2, \ \ \ 2+1+1, \ \ \ 1+1+1+1,
$$
while the overpartitions of $n=4$ are 
$$
4, \ \ \ \overline{4}, \ \ \ 3+1, \ \ \ \overline{3}+1, \ \ \ 3 + \overline{1}, \ \ \ \overline{3}+\overline{1}, \ \ \ 2+2, \ \ \ 2+\overline{2}, 
$$
$$2+1+1, \ \ \ \overline{2}+1+1, \ \ \ 2 + 1+\overline{1}, \ \ \ \overline{2}+1+\overline{1}, \ \ \ 1+1+1+1, \ \ \ 1+1+1+\overline{1}.
$$
In 2015, Bringmann, Dousse, Lovejoy, and Mahlburg \cite{BDLM} defined the function $\overline{t}(n)$ to be the number of overpartitions of weight $n$ where (i) the difference between two successive parts may be odd only if the larger part is overlined and (ii) if the smallest part is odd then it is overlined.  For example, $\overline{t}(4)= 8$ where the overpartitions in question are   
$$4,  \ \ \  \overline{4}, \ \ \ 3+\overline{1}, \ \ \  \overline{3}+\overline{1},  \ \ \ 2+2,  \ \ \ 2+\overline{2},  \ \ \ \overline{2}+1+\overline{1},  \ \ \  1+1+1+\overline{1}.
$$ 
In \cite{BDLM}, the authors prove that the generating function for $\overline{t}(n)$ is given by 
\begin{equation}\label{generating}
\sum_{n\ge0}\tbar(n)q^{n}=\frac{f_3}{f_1f_2},
\end{equation}
where 
$$
f_k=f_k(q) :=(q^k;q^k)_\infty\qquad\text{with} \qquad(a;q)_\infty:=\prod\limits_{i\geq 0}(1-aq^{i}).
$$
They also proved that $\overline{t}(n)$ satisfies a nice characterization modulo 3.  
\begin{theorem}
%%%%%%%%%%%%%%%%%%%%%
\label{3characterization}
%%%%%%%%%%%%%%%%%%%%%
For all $n\geq 1,$ 
\begin{equation*}
\overline{t}(n) \equiv
\begin{cases} 
(-1)^{k+1}\phantom{0}  \pmod{3} & \mbox{if } n =k^2 \mbox{ for\ some\ integer\ } k, \\
0\phantom{(-1)^{k+1}} \pmod{3} & \mbox{otherwise. } 
\end{cases} 
\end{equation*}
\end{theorem}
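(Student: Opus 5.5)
We work directly from the generating function \eqref{generating} and reduce it modulo $3$. The basic tool is the Frobenius congruence $f_k^3 \equiv f_{3k} \pmod{3}$, which holds because $(1-x)^3 \equiv 1-x^3 \pmod 3$ factor by factor. In particular $f_3 \equiv f_1^3 \pmod{3}$, so
\[
\sum_{n\ge 0}\overline{t}(n)q^n=\frac{f_3}{f_1f_2}\equiv \frac{f_1^3}{f_1f_2}=\frac{f_1^2}{f_2}\pmod{3}.
\]
This replaces the original quotient by a product with a classical closed form.

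The key step is to recognize $f_1^2/f_2$ as a theta function. Gauss's identity gives
\[
\frac{f_1^2}{f_2}=\frac{(q;q)_\infty}{(-q;q)_\infty}=\sum_{k=-\infty}^{\infty}(-1)^k q^{k^2}=1+2\sum_{k\ge1}(-1)^k q^{k^2}.
\]
This is the standard identity $\varphi(-q)=f_1^2/f_2$. If it is not considered citeable, it follows from the Jacobi triple product with $z=-1$.

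It remains to compare coefficients of $q^n$ for $n\ge 1$. If $n$ is not a perfect square, the coefficient is $0$, so $\overline{t}(n)\equiv 0\pmod 3$. If $n=k^2$ with $k\ge1$, the coefficient is $2(-1)^k$, and since $2\equiv -1\pmod 3$ this gives $\overline{t}(k^2)\equiv(-1)^{k+1}\pmod 3$. This is the stated characterization. Because $(-1)^{k+1}$ depends only on the parity of $k$, the sign is the same for $\pm k$, so the choice of $k$ causes no ambiguity.

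There is no real obstacle in this argument. The only substantive input is Gauss's theta identity for $f_1^2/f_2$, and everything else is the congruence $f_3\equiv f_1^3 \pmod 3$ followed by bookkeeping of coefficients.
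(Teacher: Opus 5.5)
Your argument is correct and complete. Applying $f_3\equiv f_1^3 \pmod 3$ (the case $p=3$, $k=1$ of \eqref{bt}) reduces the generating function to $f_1^2/f_2=\sum_{k\in\mathbb{Z}}(-1)^k q^{k^2}$. The coefficient $2(-1)^k\equiv(-1)^{k+1}\pmod 3$ of $q^{k^2}$ for $k\ge 1$, together with the vanishing of all other coefficients, gives exactly the stated characterization.

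There is no in-paper argument to compare against. The paper does not prove this theorem; it quotes it from Bringmann, Dousse, Lovejoy, and Mahlburg. Your derivation is the standard short one and needs nothing beyond Gauss's identity $\varphi(-q)=f_1^2/f_2$ (equivalently, the Jacobi triple product at $z=-1$, as you note).
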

Since the publication of \cite{BDLM}, several other authors have proven arithmetic properties satisfied by $\overline{t}(n)$; see \cite{CH2019, GJ2025, HS2023, HS2020, LLWX, Zhang2025, Zhang2026} for examples.  In particular, in 2023, Hanson and Smith \cite{HS2023} utilized the theory of modular forms to prove the following two congruences modulo 5:  For all $n\geq 0$, 
\begin{equation}
\label{HansonSmith_mod5congruences}
\overline{t}(80n+40)\equiv \overline{t}(80n+60)\equiv 0 \pmod{5}.
\end{equation}
Our goal in this work is straightforward -- to provide a truly elementary proof of the pair of congruences in \eqref{HansonSmith_mod5congruences} which does not rely in any way on modular forms.  Instead, we utilize well--known dissection properties which already appear in the literature, along with a generalization of Chern and Tang's work \cite{CT2021}, to initially determine the behavior of the generating function for $\overline{t}(5n)$ modulo 5.  This is, by far, the most involved aspect of our proof.  Once this is complete, we simply need to 2--dissect the generating function for $\overline{t}(5n)$ further until we reach a congruence modulo 5 satisfied by the generating function for $\overline{t}(20n)$.  The two 2--dissections that are necessary here are extremely elementary and are completed with ease.  Once this is done, the proof of \eqref{HansonSmith_mod5congruences} is readily finished. 

Said in more detail, our primary objective in this paper is the following theorem.  
\begin{theorem}\label{thm.20n}
We have
\begin{equation}
\sum_{n\ge0}\tbar(20n)q^{n}\equiv Y_0(q^{4})+qY_1(q^{4})\pmod5,
\label{eq.main80}
\end{equation}
where
\begin{equation*}
Y_0(q):=\frac{f_3^{5}f_4}{f_1f_6f_{12}}
+q\frac{f_2^{7}f_{12}^{3}}{f_1^{2}f_4^{3}f_6^{2}} \qquad \text{and}\qquad 
Y_1(q):=4\frac{f_1f_4^{3}f_6^{7}}{f_2^{2}f_3^{3}f_{12}^{3}}
+4\frac{f_2^{3}f_3^{6}f_{12}}{f_1^{2}f_4f_6^{4}}.
\end{equation*}
\end{theorem}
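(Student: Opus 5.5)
Following the outline in the introduction, the proof splits into three stages: first obtain $\sum_{n\ge0}\tbar(5n)q^n$ modulo $5$ as an explicit eta-quotient expression, then 2-dissect twice to reach $\sum_{n\ge0}\tbar(20n)q^n$.

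\textbf{Stage 1: the generating function for $\tbar(5n)$.} Write
\[
\frac{f_3}{f_1f_2}=\frac{f_3\,f_1^4f_2^4}{f_1^5f_2^5}\equiv \frac{f_3f_1^4f_2^4}{f_5f_{10}}\pmod 5.
\]
Extracting terms $q^{5n}$ therefore reduces to 5-dissecting $f_1^4f_2^4f_3$, or equivalently $f_1^4 f_2^4\cdot f_3$ with suitable rearrangements. The available tools are the classical 5-dissection of $f_1$ (Ramanujan's $f_1=f_{25}(R^{-1}-q-q^2R)$ with $R=R(q^5)$), its analogues for $f_2$ and $f_3$ built from $R(q^{10})$ and $R(q^{15})$ (this is where the macros $R(q),R(q^2),R(q^3)$ enter), and the 5-dissection of $f_1^3$ from Jacobi's identity. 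Multiplying these out and collecting the exponents $\equiv 0\pmod 5$ gives a polynomial in $R(q^5)^{\pm1}$, $R(q^{10})^{\pm1}$, $R(q^{15})^{\pm1}$. The plan is then to express the resulting symmetric combinations back in eta quotients. This requires a generalization of Chern--Tang's identities relating products like $R(q)R(q^2)$, $R(q)^{a}R(q^2)^{b}R(q^3)^c$ to quotients of $f_1,f_2,f_3,f_5,f_6,\dots$.

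I expect this stage to be the main obstacle. Many cross terms must be reduced modulo $5$, and one has to find (and prove, e.g.\ via known identities for $R(q)R(q^2)$ and $R(q)R(q^4)$ and their cubic analogues) the right eta-quotient evaluations. The target is an expression of the form
\[
\sum_{n\ge0}\tbar(5n)q^n\equiv \sum_i c_i\,\frac{\prod f_k^{a_{k,i}}}{\prod f_k^{b_{k,i}}}\pmod 5,
\]
involving only $f_1,f_2,f_3,f_4,f_6,f_{12}$ (after using $f_5^5\equiv f_1^{25}$-type simplifications to clear the level-5 factors). I would first try to guess the shape by matching $q$-expansions numerically, then prove each needed $R$-identity.

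\textbf{Stage 2: two 2-dissections.} With the mod 5 expression for $\tbar(5n)$ in hand, apply standard 2-dissections. These include
\[
\frac1{f_1^2}=\frac{f_8^5}{f_2^5f_{16}^2}+2q\frac{f_4^2f_{16}^2}{f_2^5f_8},\qquad f_1^2=\frac{f_2f_8^5}{f_4^2f_{16}^2}-2q\frac{f_2f_{16}^2}{f_8},
\]
together with those for $f_3/f_1$, $f_1/f_3$, $f_3^3/f_1$, $f_1^3/f_3$ and $f_1f_3$ (Hirschhorn's compilation). Extract the even part and replace $q^2\to q$ to obtain $\tbar(10n)$. Dissect again and extract the even part to obtain $\tbar(20n)$. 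At each step, simplify modulo $5$ using $f_k^5\equiv f_{5k}$ where convenient.

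\textbf{Stage 3: match the target.} Finally, sort the result into the components with exponents $\equiv 0$ and $\equiv1\pmod 4$, and verify that they equal $Y_0(q^4)$ and $qY_1(q^4)$ modulo $5$. The components $\equiv 2,3$ must vanish modulo $5$, which is exactly what yields $\tbar(80n+40)\equiv\tbar(80n+60)\equiv0$. Any leftover identity between eta quotients at this stage would be checked via an elementary theta-function identity or a Sturm-free coefficient comparison argument through known dissections.
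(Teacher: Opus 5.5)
There is a genuine gap in Stage~1, which carries essentially all of the content. You leave it as \emph{guess the shape numerically, then prove each needed $R$-identity}. You never identify the target, which in the paper is the single product $\sum_{n\ge0}\tbar(5n)q^n\equiv f_1f_2f_3^3/f_6^2 \pmod 5$. More importantly, you give no mechanism that could prove it.

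In the paper, the extracted Laurent polynomial in $R(q),R(q^2),R(q^3)$ is first reduced modulo $5$. Its terms then pair off, with the correct signs, into twenty combinations $G(m,n,p)=w+(-1)^{m+n}/w$. Each of these is evaluated as a polynomial in the eta quotients $K,S,T,U,V$ using the three recurrences and initial values of the Paudel--Sellers--Wang extension of Chern--Tang. This gives $q^3K^{-2}T^{-1}P(K,T,A,U,V)$ with a large polynomial $P$.

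The decisive step is the collapse $K^{-2}T^{-1}P\equiv Ug^2 \pmod 5$. This is not an identity about the continued fraction at all; it is a relation among eta quotients modulo $5$. The paper proves it in three moves:
\begin{itemize}
\item it uses $f_l^5\equiv f_{5l}$ to rewrite $K,S,T,U,V,g$ as quotients of $f_1,f_2,f_3,f_6$ only, and expresses these via the Alaca--Alaca--Williams parameterization in $s,t$;
\item it derives $s^{-4}\equiv(1-2qt)^6(1+2qt)^2+qt(1+qt)^3(1+4qt)^3 \pmod 5$ from $g\equiv(1+K)^2/K$;
\item it checks the resulting polynomial identity in $qt$.
\end{itemize}
Nothing in your plan replaces this. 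Matching $q$-expansions numerically yields only a conjecture unless you add a Sturm-type bound, and that is precisely the modular-forms input an elementary proof is meant to avoid.

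A smaller point concerns Stages~2--3. Two 2-dissections of $f_1f_2f_3^3/f_6^2$ (via those of $f_3^3/f_1$, $f_1^2$, and $f_1^2/f_3^2$) only reach $\sum_{n\ge0}\tbar(20n)q^n\equiv \frac{f_1}{f_3}\bigl(\frac{f_2^5}{f_4^2}+4q\frac{f_4^3f_6f_{12}}{f_2^2}\bigr) \pmod 5$. To arrive at $Y_0(q^4)+qY_1(q^4)$ you need two further dissections:
\begin{itemize}
\item first of $f_1/f_3$, to separate $\tbar(40n)$ and $\tbar(40n+20)$;
\item then of $f_1^6/f_3^2$ and $1/(f_1f_3)$, after which the odd-exponent parts cancel modulo $5$.
\end{itemize}
Your \emph{sort the result into residue classes mod $4$} hides these steps. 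They are routine once the Stage~1 result is known, so the real obstruction remains Stage~1.
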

With Theorem \ref{thm.20n} in hand, the following corollary immediately completes our proof of \eqref{HansonSmith_mod5congruences}.  
\begin{corollary}\label{cor.80}
For all $n\ge0$,
\[
\tbar(80n+40)\equiv0\pmod5
\qquad\text{and}\qquad
\tbar(80n+60)\equiv0\pmod5 .
\]
\end{corollary}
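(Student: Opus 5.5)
The corollary follows directly from Theorem \ref{thm.20n} by reading off residue classes of exponents modulo $4$. Writing $n=4m+r$ with $r\in\{0,1,2,3\}$, the coefficient of $q^{4m+r}$ in $\sum_{n\ge0}\tbar(20n)q^n$ is $\tbar(80m+20r)$. We therefore need to show that the coefficients of $q^{4m+2}$ and $q^{4m+3}$ in the right-hand side of \eqref{eq.main80} vanish modulo $5$. These correspond to $r=2$, giving $\tbar(80m+40)$, and $r=3$, giving $\tbar(80m+60)$.

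The right-hand side of \eqref{eq.main80} is $Y_0(q^4)+qY_1(q^4)$. Both $Y_0$ and $Y_1$ are power series in $q$ with integer coefficients, since each is a finite sum of quotients of the $f_k$ and each $f_k$ is an invertible integer power series with constant term $1$. Consequently, $Y_0(q^4)$ is supported only on exponents $\equiv 0 \pmod 4$, and $qY_1(q^4)$ is supported only on exponents $\equiv 1 \pmod 4$. No exponent $\equiv 2$ or $3 \pmod 4$ appears, so the congruence modulo $5$ (a coefficientwise statement) forces
\[
\sum_{m\ge0}\tbar(80m+40)q^{4m+2}+\sum_{m\ge0}\tbar(80m+60)q^{4m+3}\equiv 0 \pmod 5 .
\]
Extracting coefficients gives both congruences.

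There is no real obstacle here. All the substance lies in Theorem \ref{thm.20n}, whose proof requires the $5$-dissection of $f_3/(f_1f_2)$ modulo $5$ followed by two $2$-dissections. The only point needing care is that the congruence in \eqref{eq.main80} is between power series with integer coefficients, so coefficient extraction along arithmetic progressions is legitimate.
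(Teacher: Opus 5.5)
Your proof is correct and is essentially the paper's own argument: since the right-hand side $Y_0(q^4)+qY_1(q^4)$ of \eqref{eq.main80} contains no powers $q^{4n+2}$ or $q^{4n+3}$, both $\tbar(20(4n+2))=\tbar(80n+40)$ and $\tbar(20(4n+3))=\tbar(80n+60)$ are congruent to $0$ modulo $5$. Your added remark that $Y_0$ and $Y_1$ are integer power series, which justifies coefficientwise extraction modulo $5$, is a point the paper leaves implicit.
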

\begin{proof}
Note that, when written as a power series in $q$, the right--hand side of the congruence \eqref{eq.main80} contains no powers of the form $q^{4n+2}$ or $q^{4n+3}$.  Therefore,
\[
\tbar(80n+40)=\tbar\bigl(20(4n+2)\bigr)\equiv0
\quad\text{and}\quad
\tbar(80n+60)=\tbar\bigl(20(4n+3)\bigr)\equiv0\pmod5. \qedhere
\]\end{proof}

The rest of this paper is organized as follows. In Section \ref{sec.prelim},  we collect a number of necessary mathematical tools. We provide an elementary proof of Theorem \ref{thm.20n} in Section \ref{sec.proof}. We conclude the paper in Section \ref{sec.ct} by providing a new elementary proof of a congruence established by Chern and Hao \cite{CH2019} and by Lin et al. \cite{LLWX}.

\section{Preliminaries}\label{sec.prelim}
We require the following 2-dissection results in the work below.
\begin{lemma} We have
\begin{align}
f_1^{2}
&=\frac{f_2f_8^{5}}{f_4^{2}f_{16}^{2}}-2q\,\frac{f_2f_{16}^{2}}{f_8},
\label{eq.f1sq}\\
\frac{f_1^{3}}{f_3}
&=\frac{f_4^{3}}{f_{12}}
-3q\frac{f_2^{2}f_{12}^{3}}{f_4f_6^{2}}
\label{eq.f13f3},\\
\frac{f_3^{3}}{f_1}
&=\frac{f_4^{3}f_6^{2}}{f_2^{2}f_{12}}+q\,\frac{f_{12}^{3}}{f_4},
\label{eq.f33f1}\\
\frac{f_1}{f_3}
&=\frac{f_2f_{16}f_{24}^{2}}{f_6^{2}f_8f_{48}}
-q\,\frac{f_2f_8^{2}f_{12}f_{48}}{f_4f_6^{2}f_{16}f_{24}},
\label{eq.f1f3}\\
\frac1{f_1f_3}
&=\frac{f_8^{2}f_{12}^{5}}{f_2^{2}f_4f_6^{4}f_{24}^{2}}
+q\,\frac{f_4^{5}f_{24}^{2}}{f_2^{4}f_6^{2}f_8^{2}f_{12}}
\label{eq.1f1f3},\\
\frac{f_1^{2}}{f_3^{2}}
&=\frac{f_2f_4^{2}f_{12}^{4}}{f_6^{5}f_8f_{24}}
-2q\frac{f_2^{2}f_8f_{12}f_{24}}{f_4f_6^{4}}\label{eq.f12f32}.
\end{align}
\end{lemma}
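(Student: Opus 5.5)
These six identities are classical 2-dissections, and the plan is to obtain each one from a theta-function identity whose even and odd parts are known in product form. We will use Ramanujan's theta functions
$\varphi(q)=\sum_{n\in\mathbb{Z}}q^{n^2}=f_2^5/(f_1^2f_4^2)$ and $\psi(q)=\sum_{n\ge0}q^{n(n+1)/2}=f_2^2/f_1$. We will also use the substitution rule $f_1(-q)=f_2^3/(f_1f_4)$, with $f_k(-q)=f_k$ for even $k$. This rule lets us move freely between an identity and its $q\to-q$ companion, which at the level of a 2-dissection $A(q^2)+qB(q^2)$ just flips the sign of the odd part.

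For \eqref{eq.f1sq} we will start from the elementary dissection $\varphi(q)=\varphi(q^4)+2q\psi(q^8)$, obtained by splitting $n$ by parity in $\sum q^{n^2}$. We replace $q$ by $-q$ and use $\varphi(-q)=f_1^2/f_2$, $\varphi(q^4)=f_8^5/(f_4^2f_{16}^2)$ and $\psi(q^8)=f_{16}^2/f_8$. Multiplying through by $f_2$ then gives \eqref{eq.f1sq} directly.

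For the cubic pair \eqref{eq.f13f3} and \eqref{eq.f33f1} we will use the Borweins' cubic theta functions $b(q)=f_1^3/f_3$ and $c(q)=3q^{1/3}f_3^3/f_1$, together with the known identities $a(q)=a(q^4)+6q\,\psi(q^2)\psi(q^6)$ and $b(q)=a(q^3)-\tfrac13\bigl(\ldots\bigr)$. A cleaner route, which we will follow, is to take them directly from Hirschhorn's \emph{The Power of $q$} (Chapters 1 and 22). Both are proved there by parity-splitting the two-variable sums
$\sum_{m,n}\omega^{m-n}q^{m^2+mn+n^2}$ and $\sum_{m,n}q^{(m+1/3)^2+(m+1/3)(n+1/3)+(n+1/3)^2}$. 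The even-index parts are then recognized via the Jacobi triple product as $f_4^3/f_{12}$ and $f_4^3f_6^2/(f_2^2f_{12})$, and the odd parts as $f_2^2f_{12}^3/(f_4f_6^2)$ and $f_{12}^3/f_4$.

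The remaining three identities all involve $f_1$ and $f_3$ jointly, and they are the main obstacle. For \eqref{eq.1f1f3} we will write
\[
\frac{1}{f_1f_3}=\frac{\psi(q)\psi(q^3)}{f_2^2f_6^2},
\]
so it suffices to 2-dissect $\psi(q)\psi(q^3)$. Summing $q^{(m^2+m)/2+3(n^2+n)/2}$ over $m,n$ and splitting by the parity of $m+n$ (equivalently, changing variables $m=u+v$, $n=u-v$) reduces the problem to products of theta functions in $q^2$, namely $\varphi(q^6)\psi(q^2)$ plus $q\,\varphi(q^2)\psi(q^6)$ in suitable form. Converting these to eta quotients gives \eqref{eq.1f1f3}.

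For \eqref{eq.f12f32} we will similarly write $f_1^2/f_3^2=\varphi(-q)f_2/(\varphi(-q^3)f_6)$. A cleaner route is to note that
\[
\frac{f_1^2}{f_3^2}=\frac{f_1^3}{f_3}\cdot\frac{1}{f_1f_3},
\]
multiply the dissections \eqref{eq.f13f3} and \eqref{eq.1f1f3}, and collect even and odd parts. This yields a four-term even part and a four-term odd part, which must then be collapsed to the single products stated using known eta-quotient identities at level 24, such as
\[
\frac{f_4^3}{f_{12}}\cdot\frac{f_8^2f_{12}^5}{\cdots}+3q^2(\cdots)=\cdots.
\]
Finally, \eqref{eq.f1f3} follows from $f_1/f_3=(f_1^2/f_3^2)\cdot(f_3/f_1)$ by the same method. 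Here $f_3/f_1$ is the $q\to-q$ image related to \eqref{eq.f1f3}, and \eqref{eq.f1f3} itself is Hirschhorn's (or Xia--Yao's) known dissection, obtained from the quintuple product identity. The collapsing step is where the work lies.

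If a direct collapse proves messy, we will instead cite these dissections from the literature (Hirschhorn's book and Xia--Yao). For a self-contained check, we will use the fact that each side of each identity, after multiplying by a suitable eta quotient, becomes a holomorphic weight-zero-type identity among theta products. Such an identity is verified by comparing finitely many coefficients. That last check is not fully elementary, so the literature citation is the route we prefer.
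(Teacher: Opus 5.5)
The paper proves this lemma purely by citation: each of the six identities is located in the literature. Your stated fallback therefore coincides with the paper's proof. Your own derivation of \eqref{eq.f1sq}, from $\varphi(q)=\varphi(q^4)+2q\psi(q^8)$ under $q\to-q$, is correct. The self-contained derivations you sketch for the other identities, however, do not go through as written, so they should not be presented as proofs.

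(i) For \eqref{eq.1f1f3} the dissection you need is
\[
\psi(q)\psi(q^3)=\varphi(q^6)\psi(q^4)+q\,\varphi(q^2)\psi(q^{12}).
\]
Dividing by $f_2^2f_6^2$ and using $\varphi(q)=f_2^5/(f_1^2f_4^2)$ and $\psi(q)=f_2^2/f_1$ yields \eqref{eq.1f1f3} term by term. The version you wrote, with $\psi(q^2)$ and $\psi(q^6)$, is false: the coefficient of $q^2$ is $0$ on the left and $1$ on the right. Your substitution $m=u+v$, $n=u-v$ also leaves a cross term $-2uv$ in the exponent. A substitution that works: with $\psi(q)=\sum_{m\in\mathbb{Z}}q^{2m^2+m}$, take $u=(m+3n)/2$, $v=(m-n)/2$ on the class $m\equiv n\pmod 2$. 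This preserves $m^2+3n^2$ and turns the exponent into $2u^2+2u+6v^2$ with $u\equiv v\pmod 2$.

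(ii) For \eqref{eq.f12f32}, the product of \eqref{eq.f13f3} and \eqref{eq.1f1f3} has two-term even and odd parts, not four-term ones. The even part does collapse, by \eqref{eq.f13f3} itself with $q\to q^2$. The odd part, however, requires an additional identity, equivalent (after $q^2\to q$) to $3\varphi(q^3)^2-\varphi(q)^2=2f_1^2f_2f_6^3/(f_3^2f_4f_{12})$, which you never supply.

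(iii) Your route to \eqref{eq.f1f3} through $f_1/f_3=(f_1^2/f_3^2)(f_3/f_1)$ is circular. Since $f_3/f_1=\frac{f_4f_6^3}{f_2^3f_{12}}\cdot\bigl[f_1/f_3\bigr]_{q\to-q}$, dissecting $f_3/f_1$ is the same as already knowing \eqref{eq.f1f3}.

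The efficient ordering is the reverse of yours: take \eqref{eq.f1f3} and \eqref{eq.1f1f3} as inputs. Write \eqref{eq.f1f3} as $f_1/f_3=X_0-qX_1$. The cross term of its square is $-2qX_0X_1=-2q\,f_2^2f_8f_{12}f_{24}/(f_4f_6^4)$, which is exactly the odd part of \eqref{eq.f12f32}. For the even part, \eqref{eq.1f1f3} with $q\to q^2$ gives
\[
X_0^2+q^2X_1^2=\frac{f_2^2}{f_6^4}\cdot\frac{f_4^2f_{12}^4}{f_8f_{24}}\cdot\frac{1}{f_2f_6}=\frac{f_2f_4^2f_{12}^4}{f_6^5f_8f_{24}}.
\]
With the corrected $\psi(q)\psi(q^3)$ identity, only \eqref{eq.f13f3}, \eqref{eq.f33f1}, and \eqref{eq.f1f3} genuinely need to be taken from the literature. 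That is somewhat more self-contained than the paper's all-citation proof.
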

\begin{proof}
Equation \eqref{eq.f1sq} is (18) in \cite[Lemma 1]{daSilva-Sellers}. Identities \eqref{eq.f13f3} and \eqref{eq.f12f32} correspond to (30) and (34), respectively, in \cite[Lemma 1]{SS20}. Equations \eqref{eq.f33f1} and \eqref{eq.f1f3} appear as (3.75) and (2.17) in \cite{XiaYao} and \cite{XiaYaob}, respectively. Identity \eqref{eq.1f1f3} follows by adding Equations (4.2) and (4.3) given in \cite[Theorem 4.3]{BaruahOjah}.
\end{proof}
\begin{lemma} We have 
\begin{equation}
\frac{f_1^{6}}{f_3^{2}}
=\frac{f_2f_4f_6^{5}}{f_{12}^{3}}
-6q\frac{f_2^2f_4^2f_{12}^2}{f_6^2}
+10q^{2}\frac{f_2^{4}f_{12}^{6}}{f_4^{2}f_6^{4}}.
\label{eq.f16f32}
\end{equation}
\end{lemma}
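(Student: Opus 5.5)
Since $f_1^6/f_3^2=(f_1^3/f_3)^2$, the natural plan is to square the 2-dissection \eqref{eq.f13f3} and then rewrite the resulting even part. Squaring gives
\begin{equation*}
\frac{f_1^{6}}{f_3^{2}}=\frac{f_4^{6}}{f_{12}^{2}}-6q\frac{f_2^{2}f_4^{2}f_{12}^{2}}{f_6^{2}}+9q^{2}\frac{f_2^{4}f_{12}^{6}}{f_4^{2}f_6^{4}}.
\end{equation*}
The middle term already agrees with the middle term of \eqref{eq.f16f32}. The odd-power part of the claimed right-hand side is therefore correct.

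It remains to prove the identity among even parts:
\begin{equation*}
\frac{f_4^{6}}{f_{12}^{2}}+9q^{2}\frac{f_2^{4}f_{12}^{6}}{f_4^{2}f_6^{4}}=\frac{f_2f_4f_6^{5}}{f_{12}^{3}}+10q^{2}\frac{f_2^{4}f_{12}^{6}}{f_4^{2}f_6^{4}}.
\end{equation*}
Equivalently, after replacing $q^2$ by $q$,
\begin{equation*}
\frac{f_2^{6}}{f_6^{2}}-\frac{f_1f_2f_3^{5}}{f_6^{3}}=q\frac{f_1^{4}f_6^{6}}{f_2^{2}f_3^{4}}.
\end{equation*}
Multiplying through by $f_6^2/f_2^6$ turns this into
\begin{equation*}
1-\frac{f_1f_3^{5}}{f_2^{5}f_6}=q\frac{f_1^{4}f_6^{8}}{f_2^{8}f_3^{4}}.
\end{equation*}
This is a level-6 eta-quotient identity. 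I will prove it elementarily from the cubic theta functions, or equivalently from known 2-dissections.

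One concrete route uses the standard product formulas
\begin{equation*}
a(q)=\frac{f_1^3}{f_3}+9q\frac{f_9^3}{f_3}\ \text{-type identities},\qquad b(q)=\frac{f_1^3}{f_3},\qquad c(q)=3q^{1/3}\frac{f_3^3}{f_1},
\end{equation*}
together with the relations
\begin{equation*}
\varphi(-q)=\frac{f_1^2}{f_2},\qquad \psi(q)=\frac{f_2^2}{f_1}.
\end{equation*}
An alternative route, closer to the paper's own toolkit, is to divide \eqref{eq.f13f3} by \eqref{eq.f33f1} suitably, or to apply $q\mapsto -q$ to the known identities. Replacing $q$ by $-q$ sends $f_1\mapsto f_2^3/(f_1f_4)$ and $f_3\mapsto f_6^3/(f_3f_{12})$. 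Adding and subtracting \eqref{eq.f13f3} with its $q\to -q$ image yields even and odd parts, and these can be compared with the target.

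The cleanest choice, I think, is to use the known identity
\begin{equation*}
\frac{f_2f_3^6}{f_1^2f_6^3}-q\frac{\cdots}{\cdots}=\cdots,
\end{equation*}
namely the relation $\varphi(-q)\varphi(-q^3)$ versus $\psi$. Concretely, the target is equivalent to
\begin{equation*}
\frac{f_2^{6}f_3}{f_1f_6^{2}}-\frac{f_3^{6}}{f_6}\ \ldots,
\end{equation*}
and I expect it to reduce to a rearrangement of the classical identity
\begin{equation*}
\frac{f_2^{5}f_3f_{12}^{2}}{\ldots}
\end{equation*}
coming from multiplying \eqref{eq.f13f3} by \eqref{eq.f33f1}. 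Their product is $f_1^2f_3^2$, whose odd part is known from \eqref{eq.f1sq}-type dissections.

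Verifying this residual even-part identity is the main obstacle. As a safeguard, the identity is a modular-form-free check: both sides are $q$-series, and one can confirm agreement of many coefficients. If a direct elementary derivation proves elusive, I would cite it as a known level-6 identity, for instance from Xia–Yao's catalogue \cite{XiaYao}, in the same way the previous lemma cites the literature.
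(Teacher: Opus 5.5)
Your squaring of \eqref{eq.f13f3} and your reduction are correct. Write $A=f_4^3/f_{12}$ and $B=f_2^2f_{12}^3/(f_4f_6^2)$. Then the lemma is equivalent to the single identity
\[
A^2-q^2B^2=\frac{f_2f_4f_6^5}{f_{12}^3},
\]
and your normalized form $1-\frac{f_1f_3^5}{f_2^5f_6}=q\frac{f_1^4f_6^8}{f_2^8f_3^4}$ is an equivalent restatement.

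However, this identity is the entire nontrivial content of the lemma, and your proposal never proves it. The ``cleanest choice'' and ``concretely'' displays contain placeholders. Adding and subtracting \eqref{eq.f13f3} with its $q\to-q$ image only recovers the two components $A$ and $-3qB$ you already have. Multiplying \eqref{eq.f13f3} by \eqref{eq.f33f1} gives $\frac{f_6^2}{f_2^2}(A-3qB)(A+qB)=\frac{f_6^2}{f_2^2}(A^2-2qAB-3q^2B^2)$, which involves $A^2-3q^2B^2$, not $A^2-q^2B^2$. Checking finitely many coefficients is not a proof unless you invoke a Sturm-type bound, which is exactly the modular-forms machinery the paper avoids. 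An unspecified citation ``for instance from Xia--Yao'' does not close the gap either.

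The missing idea is to factor the difference of squares. Multiplying \eqref{eq.f33f1} by $f_2^2/f_6^2$ shows $A+qB=\frac{f_2^2f_3^3}{f_1f_6^2}$. Since $A$ and $B$ are series in $q^2$, replacing $q$ by $-q$ and using $f_1(-q)=f_2^3/(f_1f_4)$ and $f_3(-q)=f_6^3/(f_3f_{12})$ gives $A-qB=\frac{f_1f_4f_6^7}{f_2f_3^3f_{12}^3}$. Multiplying the two yields $A^2-q^2B^2=\frac{f_2f_4f_6^5}{f_{12}^3}$. Finally, writing $A^2+9q^2B^2=(A^2-q^2B^2)+10q^2B^2$ gives the lemma. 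This is precisely the paper's argument. It is cleanest in the unhalved form: after $q^2\mapsto q$ the same factorization would involve $q^{1/2}$, which is likely why your halved version obscured it.
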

\begin{proof} Squaring \eqref{eq.f13f3}, we get 
\begin{equation}
\label{eq.f16f321}\frac{f_1^{6}}{f_3^{2}}
=\left(\frac{f_4^3}{f_{12}}\right)^2-\left(q\frac{f_2^{2}f_{12}^{3}}{f_4f_6^{2}}\right)^2
-6q\frac{f_2^2f_4^2f_{12}^2}{f_6^2}
+10q^{2}\,\frac{f_2^{4}f_{12}^{6}}{f_4^{2}f_6^{4}}.\end{equation}
Multiplying \eqref{eq.f33f1} by $f_2^2/f_6^2$ gives 
\begin{equation}\label{+}
    \frac{f_4^3}{f_{12}}+q\frac{f_2^{2}f_{12}^{3}}{f_4f_6^{2}}=\frac{f_2^2f_3^3}{f_1f_6^2}.
\end{equation}
Now, replacing $q$ by $-q$ in \eqref{+} and using the fact $f_1(-q)=\dfrac{f_2^3}{f_1 f_4}$ yields
\begin{align}\label{-}
    \frac{f_4^3}{f_{12}}-q\frac{f_2^{2}f_{12}^{3}}{f_4f_6^{2}}&=\frac{f_2^2}{f_6^2}\cdot f_3^3(-q)\cdot \frac{1}{f_1(-q)}=\frac{f_1f_4f_6^{7}}{f_2f_3^{3}f_{12}^{3}}.
\end{align}
Using \eqref{+} and \eqref{-}, we obtain $$\left(\frac{f_4^3}{f_{12}}\right)^2-\left(q\frac{f_2^{2}f_{12}^{3}}{f_4f_6^{2}}\right)^2=\left(\frac{f_4^3}{f_{12}}+q\frac{f_2^{2}f_{12}^{3}}{f_4f_6^{2}}\right)\left(\frac{f_4^3}{f_{12}}-q\frac{f_2^{2}f_{12}^{3}}{f_4f_6^{2}}\right)=\frac{f_2f_4f_6^{5}}{f_{12}^{3}}.$$
Substituting the last identity into \eqref{eq.f16f321} completes the proof.
\end{proof}
Next, we introduce the Rogers--Ramanujan continued fraction
\[
R:=R(q)=\frac{(q;q^5)_\infty(q^4;q^5)_\infty}{(q^2;q^5)_\infty(q^3;q^5)_\infty},
\]
which first appeared in the work of Rogers \cite{Rogers} and later in Ramanujan's first letter to Hardy dated January 16, 1913. For further background and a detailed treatment, see \cite[Chs.~8, 9, 15, 16, and~17]{Hirschhorn}. With the continued fraction $R$, we have the following $5$-dissections.

\begin{lemma}\label{lemma5-disse} We have
\begin{align}
    f_1&=f_{25}\left(\frac{1}{R(q^5)}-q-q^2R(q^5)\right), \label{eq.f1}\\
    \frac{1}{f_1}&=\frac{f_{25}^{\,5}}{f_{5}^{\,6}}\,\Phi\!\bigl(R(q^{5}),\,q\bigr), \label{eq.1f1}
\end{align}
where
\begin{equation*}
\Phi(r,Q)=\frac1{r^{4}}+\frac{Q}{r^{3}}+\frac{2Q^{2}}{r^{2}}+\frac{3Q^{3}}{r}+5Q^{4}
-3Q^{5}r+2Q^{6}r^{2}-Q^{7}r^{3}+Q^{8}r^{4}.
%\label{eq.Phi}
\end{equation*}
\end{lemma}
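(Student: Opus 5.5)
The plan is to derive both identities from the classical Jacobi triple product, via the quintuple product identity or Ramanujan's 5-dissection of $f_1$. These are standard results; see Hirschhorn's book \cite[Ch.~8]{Hirschhorn}.

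For \eqref{eq.f1}, I start from Euler's pentagonal number theorem,
\[
f_1=\sum_{k=-\infty}^{\infty}(-1)^k q^{k(3k-1)/2}.
\]
I split the sum according to the residue of $k$ modulo $5$. The exponent $k(3k-1)/2$ modulo $5$ takes only the residues $0,1,2$: residue $0$ for $k\equiv 0,2$, residue $2$ for $k\equiv 1$, and residue $1$ for $k\equiv 3,4$. The $k\equiv 1$ class is a single theta series. Writing $k=5m+1$, the Jacobi triple product turns it into $-q\,f_{25}$. Each of the other two residue classes is a sum of two theta series. Via the triple product, each becomes a product $(q^{a};q^{25})(q^{25-a};q^{25})(q^{25};q^{25})$-type expression, possibly after applying the quintuple product identity. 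Dividing by $f_{25}$ identifies these pieces with $1/R(q^5)$ and $-q^2R(q^5)$. This uses $R(q^5)=(q^5;q^{25})(q^{20};q^{25})/((q^{10};q^{25})(q^{15};q^{25}))$ together with the product formula for $f_{25}$ in terms of these factors. This is exactly Ramanujan's classical identity, so I would also be content to cite it.

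For \eqref{eq.1f1}, write $a=1/R(q^5)$ and $b=qR(q^5)$, so that $f_1=f_{25}(a-q-qb)$ after rearranging. It is cleaner to set $x=qR(q^5)$, so that $f_1=f_{25}\bigl(1/R-q-x q\bigr)$. I form the norm
\[
N=\prod_{\zeta^5=1}\Bigl(\frac1{R}-\zeta q-\zeta^2q^2R\Bigr),
\]
where $R=R(q^5)$ is fixed, since replacing $q\to\zeta q$ leaves $R(q^5)$ and $f_{25}$ unchanged. Then
\[
\frac1{f_1}=\frac{1}{f_{25}}\cdot\frac{\prod_{\zeta\ne1}\bigl(R^{-1}-\zeta q-\zeta^2q^2R\bigr)}{N}.
\]
Expanding $N$ as a polynomial symmetric in the roots of unity kills all powers of $q$ not divisible by $5$, which gives
\[
N=\frac1{R^5}-11q^5-q^{10}R^5.
\]
The key classical identity is then
\[
\frac1{R^5}-11q^5-q^{10}R^5=\frac{f_5^6}{f_{25}^6},
\]
which itself follows by taking the product over $\zeta$ of \eqref{eq.f1}. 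Indeed, $\prod_\zeta f_1(\zeta q)=f_5^6/f_{25}$, because $\prod_\zeta(1-\zeta^n q^n)$ equals $1-q^{5n}$ if $5\nmid n$ and $(1-q^n)^5$ if $5\mid n$. The numerator is the product of the four conjugate factors. I expand it by brute-force multiplication, or better by computing the quotient $N/\bigl(R^{-1}-q-q^2R\bigr)$ by polynomial division in $q$ with coefficients in $R$. Carried out, this should yield $\Phi(R,q)$ times $R^0$. Combining,
\[
\frac1{f_1}=\frac{f_{25}^6}{f_{25}f_5^6}\,\Phi=\frac{f_{25}^5}{f_5^6}\,\Phi(R(q^5),q).
\]

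The main obstacle is the explicit expansion of the product of the four conjugates into $\Phi$. Rather than multiplying out, I would verify that
\[
\Bigl(\frac1r-Q-Q^2r\Bigr)\Phi(r,Q)=\frac1{r^5}-11Q^5-Q^{10}r^5,
\]
a finite polynomial check in $r$ and $Q$, comparing coefficients of $Q^jr^{j-5}$. The nontrivial part is that the middle coefficients cancel down to $-11$ at $Q^5$: the contributions are $5-3\cdot 1-2\cdot 3=-4$, with the rest cancelling likewise. Since everything is a polynomial identity, this is routine. The only genuinely deep input is Ramanujan's \eqref{eq.f1}, which I would take from \cite{Hirschhorn}.
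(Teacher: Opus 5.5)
Your proposal is correct, and it goes further than the paper, which proves nothing here: it cites Watson for \eqref{eq.f1} and Berndt's (7.4.14) for \eqref{eq.1f1}. For \eqref{eq.f1} you end up in the same place as the paper, since your argument via the pentagonal number theorem and the quintuple product is only a sketch backed by a citation.

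For \eqref{eq.1f1}, however, you give a genuine derivation from \eqref{eq.f1}:
\begin{itemize}
\item Since $f_{25}$ and $R(q^5)$ are fixed by $q\mapsto\zeta q$, the norm over $\zeta^5=1$ gives $f_{25}^5N=\prod_{\zeta}f_1(\zeta q)=f_5^6/f_{25}$.
\item The norm itself is $N=R^{-5}-11q^5-q^{10}R^5$. This is true, but you only described it rather than computed it; one way is to write $r^{-1}-x-rx^2=-r(x-x_1)(x-x_2)$ and use $\prod_\zeta(x_i-\zeta Q)=x_i^5-Q^5$.
\item The finite identity $(r^{-1}-Q-Q^2r)\Phi(r,Q)=r^{-5}-11Q^5-Q^{10}r^5$ then yields $1/f_1=f_{25}^5\Phi(R(q^5),q)/f_5^6$.
\end{itemize}
This gives a self-contained proof of \eqref{eq.1f1} from \eqref{eq.f1}, with the classical identity $R(q^5)^{-5}-11q^5-q^{10}R(q^5)^5=f_5^6/f_{25}^6$ as a byproduct. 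The cost is a computation that the paper's citation avoids.

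Two slips need fixing.

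First, your residue table is swapped. In fact $k(3k-1)/2\equiv1\pmod 5$ for $k\equiv1$, and $\equiv2$ for $k\equiv3,4$. This is what matches your own conclusions that the $k\equiv1$ class gives $-qf_{25}$ and the $k\equiv3,4$ classes give $-q^2f_{25}R(q^5)$.

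Second, the coefficient check is misreported. Write $\Phi=\sum_{j=0}^{8}c_jQ^jr^{j-4}$ with $(c_0,\dots,c_8)=(1,1,2,3,5,-3,2,-1,1)$, and set $c_j=0$ outside $0\le j\le 8$. Then the coefficient of $Q^kr^{k-5}$ in $(r^{-1}-Q-Q^2r)\Phi$ is $c_k-c_{k-1}-c_{k-2}$. This equals $1$ for $k=0$, $-3-5-3=-11$ for $k=5$, $-1$ for $k=10$, and $0$ otherwise. Your ``$5-3\cdot1-2\cdot3=-4$'' does not correspond to this computation.
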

\begin{proof}
    Identity \eqref{eq.f1} was stated by Ramanujan without proof in his work on continued fractions, and a proof can be found in \cite{Watson}. Equation \eqref{eq.1f1} appears as (7.4.14) in \cite{Berndt_2}.
\end{proof}

As immediate consequences of the previous lemma, we obtain
\begin{align}
\label{eq.f3}
f_3&=f_{75}\left(\frac1{R(q^{15})}-q^{3}-q^{6}\,R(q^{15})\right),\\
\label{eq.recip} \frac{1}{f_k}&=\frac{f_{25k}^{\,5}}{f_{5k}^{\,6}}\,\Phi\!\bigl(R(q^{5k}),\,q^k\bigr).
\end{align}
To simplify the notation in our work below, we set
\begin{align}
  %&\alpha=R(q), \qquad \qquad\beta=R(q^{2}), \qquad \qquad\gamma=R(q^{3}),\\
 \label{K-X}  &K=q^{-1}\frac{f_2f_5^{5}}{f_1f_{10}^{5}},\qquad
S=q^{-2}\frac{f_1^{3}f_3^{3}}{f_5^{3}f_{15}^{3}},\qquad
T=q^{-2}\frac{f_3f_5^{5}}{f_1f_{15}^{5}},\\
\label{V-def}&U=q^{-1}\frac{f_2^{3}f_3^{3}}
{f_1^{2}f_6^{2}f_{10}f_{15}},\qquad V=\frac{f_1^{2}f_6f_{30}}{f_3f_{10}^{2}f_{15}}.
\end{align}

For $\alpha\in\mathbb{Z}_{\ge0}$ and $\beta\in\mathbb{Z}$, Chern and Tang
\cite{CT2021} introduced the two families
\begin{align*}
	&\frac{1}{q^{\alpha}R(q)^{\alpha+2\beta}R(q^{2})^{2\alpha-\beta}}
	+(-1)^{\alpha+\beta}\,q^{\alpha}R(q)^{\alpha+2\beta}R(q^{2})^{2\alpha-\beta},\\
	&\frac{1}{q^{\alpha}R(q)^{2\alpha+3\beta}R(q^{3})^{\alpha-\beta}}
	+(-1)^{\alpha}\,q^{\alpha}R(q)^{2\alpha+3\beta}R(q^{3})^{\alpha-\beta},
\end{align*}
and represented them in terms of eta quotients involving $K, S, T$ through recurrence relations. In \cite{PaudelSellersWang}, the authors extended the results of Chern and Tang to the following three-parameter family, which is extremely beneficial for representing the generating functions in question. For a triple $(m,n,p)\in\mathbb{Z}^{3}$, we set
\begin{equation*}
w(m,n,p):=q^{-p}\,R(q)^{-2m-n}\,R(q^{2})^{m-n-p}\,R(q^{3})^{n-p},
\end{equation*}
and define
\begin{equation}\label{eq.G}
G(m,n,p):=w(m,n,p)+\frac{(-1)^{m+n}}{w(m,n,p)}.
\end{equation}
Then, we have $G(0,0,0)=2$ and the reflection property
\begin{equation}
G(-m,-n,-p)=(-1)^{m+n}\,G(m,n,p).
\label{eq.reflect}
\end{equation}

\begin{lemma}[{\cite[Theorem~1.1]{PaudelSellersWang}}]\label{thm.P1main}
For all $(m,n,p)\in\mathbb{Z}^{3}$, the following recurrences hold:
\begin{align}
G(m+1,n,p)&=\frac4K\,G(m,n,p)+G(m-1,n,p),
\label{eq.recm}\\
G(m,n+1,p)&=(1-V)\,G(m,n,p)+G(m,n-1,p),
\label{eq.recn}\\
G(m,n,p+1)&=(U-2)\,G(m,n,p)-G(m,n,p-1).
\label{eq.recp}
\end{align}
Moreover, we have the initial values
\begin{align}
\label{eq.init0}G(0,0,0)&=2,\\
G(1,0,0)&=\frac4K,\\
G(0,1,1)&=K,\\
G(1,1,1)&=K+2+\frac4K,\\
G(1,1,0)&=2+\frac9T,\\
G(1,0,1)&=\frac14\left(T-S+\frac9T+6\right),\\
G(0,1,0)&=1-V,\\
G(0,0,1)&=U-2.
\label{eq.init1}
\end{align}
\end{lemma}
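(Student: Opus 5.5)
The recurrences \eqref{eq.recm}--\eqref{eq.recp} are formal consequences of the multiplicativity of $w$, so the substance of the lemma lies in the initial values. I would first settle the recurrences and then turn to the initial values \eqref{eq.init0}--\eqref{eq.init1}.

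\textbf{Step 1: the recurrences.} Note that $w(m,n,p)$ is multiplicative in its argument: $w(a+b)=w(a)w(b)$ for $a,b\in\mathbb{Z}^3$. Write $x=w(a)$, $y=w(b)$ and $s_a=(-1)^{a_1+a_2}$, so that $G(a)=x+s_a/x$. Expanding the product gives
\[
G(a)G(b)=xy+\frac{s_as_b}{xy}+s_b\Bigl(\frac{x}{y}+s_as_b\frac{y}{x}\Bigr)=G(a+b)+s_b\,G(a-b),
\]
where we use that $s_{a+b}=s_{a-b}=s_as_b$. I would apply this with $b=(1,0,0)$, $(0,1,0)$ and $(0,0,1)$, whose signs are $-1,-1,+1$ respectively. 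This gives
\[
G(a+b)=G(b)\,G(a)-s_b\,G(a-b),
\]
which is exactly \eqref{eq.recm}--\eqref{eq.recp}, provided that $G(1,0,0)=4/K$, $G(0,1,0)=1-V$ and $G(0,0,1)=U-2$. So the recurrences reduce to three of the initial values.

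\textbf{Step 2: the initial values.} The same product identity shows that the three unit values do not determine mixed values such as $G(1,1,0)$: for instance $G(1,0,0)G(0,1,0)=G(1,1,0)+G(1,-1,0)$ involves two unknowns. Hence all eight listed values must be established independently as identities among $R(q)$, $R(q^2)$, $R(q^3)$ and eta quotients. $G(0,0,0)=2$ is trivial.

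\begin{itemize}
\item $G(1,0,0)$: here $w(1,0,0)=R(q^2)/R(q)^2$ with sign $-1$, and the identity is Ramanujan's classical relation
\[
\frac{R(q^2)}{R(q)^2}-\frac{R(q)^2}{R(q^2)}=4q\frac{f_1f_{10}^5}{f_2f_5^5},
\]
which appears in Chern--Tang \cite{CT2021} and in \cite{Hirschhorn}.
\item $G(1,1,0)=2+9/T$ (with $w(1,1,0)=R(q)^{-3}R(q^3)$) and $G(0,1,1)=K$ (with $w(0,1,1)=q^{-1}R(q)^{-1}R(q^2)^{-2}$): these are, up to the normalization of $(\alpha,\beta)$, members of the two Chern--Tang families, so I would take them from \cite{CT2021}.
\item $G(1,1,1)=K+2+4/K$: here $w(1,1,1)=q^{-1}R(q)^{-3}R(q^2)^{-1}$, which again lies in Chern--Tang's level-$10$ family.
\item $G(0,1,0)$, $G(0,0,1)$ and $G(1,0,1)$: these mix $R(q)$, $R(q^2)$ and $R(q^3)$ simultaneously and are genuinely new level-$30$ identities.
\end{itemize}

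\textbf{Step 3: the main obstacle.} Proving the new level-$30$ identities $G(0,1,0)=1-V$, $G(0,0,1)=U-2$ and $G(1,0,1)=\tfrac14(T-S+9/T+6)$ is where I expect the difficulty. My first approach would be to express $R$ through the quintuple-product form \eqref{eq.f1}, and the analogue \eqref{eq.f3} for $q^3$. Clearing denominators then turns each identity into an identity among finitely many theta series in $q$, $q^2$, $q^3$. I would verify these either by a Hirschhorn-style dissection argument, or, if that becomes unwieldy, by viewing both sides as modular functions on $\Gamma_0(30)$, checking the orders at the cusps, and comparing enough $q$-expansion coefficients (Sturm bound).

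A fallback is available. Once $G(0,1,0)$ and $G(0,0,1)$ are known, $G(1,0,1)$ can be cross-checked through the product relation
\[
G(1,0,0)G(0,0,1)=G(1,0,1)-G(1,0,-1),
\]
together with an independent evaluation of $G(1,0,-1)$.
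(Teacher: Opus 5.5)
The paper does not prove this lemma; it imports it from \cite{PaudelSellersWang}. Your Step 1 is correct and complete. Multiplicativity of $w$ gives $G(a)G(b)=G(a+b)+s_b\,G(a-b)$. Applying this with the three unit vectors yields \eqref{eq.recm}--\eqref{eq.recp} from $G(1,0,0)$, $G(0,1,0)$, $G(0,0,1)$. You are also right that the eight values on $\{0,1\}^3$ are exactly the seeds needed.

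Your bookkeeping of those seeds, however, has errors.
\begin{itemize}
\item Since $w(1,0,1)=q^{-1}R(q)^{-2}R(q^3)^{-1}$ contains no $R(q^2)$ (the exponent $m-n-p$ vanishes), $G(1,0,1)$ is the $(\alpha,\beta)=(1,0)$ member of Chern--Tang's second family. It comes from \cite{CT2021} exactly as $G(1,1,0)$, the $(0,1)$ member, does. It is not a new level-$30$ identity, and your fallback for it is unnecessary; that fallback would in any case need $G(1,0,-1)$, which is no easier.
\item $G(0,0,1)$, though genuinely new, involves only $R(q^2)$ and $R(q^3)$, not all three.
\item Both of your sample product relations have the wrong sign. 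Because $s_{(0,1,0)}=-1$ and $s_{(0,0,1)}=+1$, one has $G(1,0,0)G(0,1,0)=G(1,1,0)-G(1,-1,0)$, which agrees with the paper's $G(1,-1,0)=2+9/T-4(1-V)/K$. Likewise $G(1,0,0)G(0,0,1)=G(1,0,1)+G(1,0,-1)$.
\end{itemize}

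The genuine gap is that the two seeds not already in the literature, $G(0,1,0)=1-V$ and $G(0,0,1)=U-2$, are precisely the new content of the cited theorem. Step 3 names methods without proving either.

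The dissection route, as you describe it, is misaimed. \eqref{eq.f1} expresses $f_1$ in terms of $R(q^5)$ and does not rewrite $R(q)$. You would instead use the defining product for each $R(q^k)$ as a theta quotient, and you would still have to prove the resulting theta-function identity.

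The modular route can work, but you skip the step that makes it legitimate: showing that $G(m,n,p)$ is a modular function on $\Gamma_0(30)$ at all, even though $R$ itself only lives on $\Gamma(5)$. Write $R^*(\tau)=q^{1/5}R(q)$ and $w(m,n,p)=R^*(\tau)^{e_1}R^*(2\tau)^{e_2}R^*(3\tau)^{e_3}$ with $(e_1,e_2,e_3)=(-2m-n,\,m-n-p,\,n-p)$. Then:
\begin{itemize}
\item $R^*$ is multiplied by $\zeta_5^{b}$ under elements congruent to $\pm\left(\begin{smallmatrix}1&b\\0&1\end{smallmatrix}\right)$ modulo $5$.
\item $R^*$ is sent to $-1/R^*$ under elements congruent to $\pm\left(\begin{smallmatrix}2&0\\0&3\end{smallmatrix}\right)$ modulo $5$.
\item After conjugating to $2\tau$ and $3\tau$, the relations $e_1+2e_2+3e_3=-5p$ and $e_1+e_2+e_3\equiv m+n \pmod 2$ show that $w$ is fixed by the first kind of element and sent to $(-1)^{m+n}/w$ by the second.
\end{itemize}
That is exactly why \eqref{eq.G} carries the sign $(-1)^{m+n}$. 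Only once this invariance is established, and the orders of both sides at every cusp of $\Gamma_0(30)$ are bounded, does a finite coefficient comparison prove the two identities.
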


\begin{lemma} We have
\begin{equation}
  K-3-\dfrac4K =q^{-1}\dfrac{f_1^{2}f_2^{2}}{f_5^{2}f_{10}^{2}}. \label{eq.k}
\end{equation}
\end{lemma}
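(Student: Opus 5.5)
The plan is to reduce the identity to a statement about the Rogers--Ramanujan continued fraction, using Lemma~\ref{thm.P1main} as a dictionary between eta quotients and the functions $G(m,n,p)$. Write $x=R(q)$ and $y=R(q^2)$. By \eqref{eq.G}, $w(1,0,0)=x^{-2}y$, so the initial value $G(1,0,0)=4/K$ gives
\[
\frac4K=\frac{y}{x^{2}}-\frac{x^{2}}{y}.
\]
Similarly $w(0,1,1)=q^{-1}x^{-1}y^{-2}$ (the factor $R(q^3)$ drops out), so $G(0,1,1)=K$ gives
\[
K=\frac{1}{q\,x y^{2}}-q\,x y^{2}.
\]
Hence the left-hand side $K-3-4/K$ becomes an explicit Laurent polynomial in $x$, $y$ and $q$.

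Next I would express the right-hand side in the same variables. I would use \eqref{eq.f1} with $q$ replaced by $q^{1/5}$ and by $q^{2/5}$. This yields Ramanujan's classical product formulas
\[
\frac{1}{x}-1-x=\frac{f_{1/5}}{q^{1/5}f_5},
\]
together with its analogue at $q^2$. I would then combine these with the quintic relation for $x^{-5}-11-x^{5}$ in terms of $f_1^6/(qf_5^6)$ (Ramanujan's identity from \cite{Hirschhorn}) and its $q\to q^2$ version.

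The cleaner route is to multiply the target identity by $K$ first. It then reads
\[
(K+1)(K-4)=q^{-2}\frac{f_1f_2^{3}f_5^{3}}{f_{10}^{7}}.
\]
I would factor each of $K+1$ and $K-4$ separately as an eta quotient times a power of $q$. I expect each factor to correspond to a product over a single cusp of level $10$, since $K$ is a Hauptmodul for $\Gamma_0(10)$. I would try to obtain these factorizations from the $x,y$ expressions above, using the modular equation linking $R(q)$ and $R(q^2)$:
\[
\frac{y-x^{2}}{y+x^{2}}=x y^{2}.
\]
With it, $K+1$ and $K-4$ should each collapse to a monomial in expressions like $y\pm x^2$ and $1\pm xy^2$.

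Those expressions in turn are eta quotients, by the standard product formulas for $R$. These come directly from the definition $R=(q;q^5)_\infty(q^4;q^5)_\infty/\bigl((q^2;q^5)_\infty(q^3;q^5)_\infty\bigr)$ together with Jacobi's triple product. If the factorization attempt fails, the fallback is to use the recurrence \eqref{eq.recm}. It gives
\[
G(2,0,0)=\frac{16}{K^2}+2,
\]
and one can compare $G(2,0,0)$ and $G(1,0,0)^2$ with squared product formulas.

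The main obstacle is the step that converts $y\pm x^{2}$ (or $1\pm x y^2$) into eta products with the correct powers of $q$. This requires careful bookkeeping of the $(q^a;q^5)_\infty$ and $(q^{2a};q^{10})_\infty$ factors. Once those two product formulas are in hand, the identity follows by direct multiplication and the relation $f_1(-q)=f_2^3/(f_1f_4)$-type simplifications. As a final check, I would verify that the first several coefficients of both sides agree as $q$-series.
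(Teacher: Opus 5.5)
\textbf{There is a genuine gap: the mechanism you propose for proving the factorization cannot work.}

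Your reduction to $(K+1)(K-4)=q^{-2}f_1f_2^{3}f_5^{3}/f_{10}^{7}$ is correct. The factorization you anticipate is also true: $K+1=q^{-1}\frac{f_2^4f_5^2}{f_1^2f_{10}^4}$ and $K-4=q^{-1}\frac{f_1^3f_5}{f_2f_{10}^3}$.

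To see why the mechanism fails, put $x=R(q)$, $y=R(q^2)$ and $z=x^2/y$. The initial value $G(1,0,0)=4/K$ says $4/K=1/z-z$, so
\[
K+1=\frac{1+4z-z^{2}}{1-z^{2}},\qquad K-4=\frac{4(z^{2}+z-1)}{1-z^{2}}.
\]
With the paper's normalization of $R$, the modular equation reads $\frac{y-x^2}{y+x^2}=qxy^2$. Your version without the $q$, and your formula $\frac1x-1-x=\frac{f_{1/5}}{q^{1/5}f_5}$, are the ones for $q^{1/5}R(q)$. The correct equation gives $y\pm x^2=y(1\pm z)$, $1+qxy^2=\frac{2}{1+z}$ and $1-qxy^2=\frac{2z}{1+z}$. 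It also gives $qx^5=z^2\frac{1-z}{1+z}$ and $q^2y^5=\frac{(1-z)^2}{z(1+z)^2}$.

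Hence the fifth power of any monomial in $q,x,y,y\pm x^2,1\pm qxy^2$ has the form $c\,q^{e}z^{a}(1-z)^{b}(1+z)^{d}$. By contrast, $(K+1)^5$ and $(K-4)^5$ contain the factors $(1+4z-z^2)^5$ and $(z^2+z-1)^5$, whose roots $2\pm\sqrt5$ and $\frac{-1\pm\sqrt5}{2}$ are not among $0,\pm1$. So the collapse you expect cannot occur. You would need an independent product formula for $1+4z-z^2$, which is essentially the lemma itself. The step you flag as the main obstacle, writing $y\pm x^2$ as products, is not where the difficulty lies.

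Your fallback is vacuous. Since $w(2,0,0)=w(1,0,0)^2$, the relation $G(2,0,0)=16/K^2+2$ is just $G(2,0,0)=G(1,0,0)^2+2$. It carries no information about $f_1^2f_2^2/(f_5^2f_{10}^2)$.

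\textbf{How to repair it.} The route you mention first and then abandon does work once completed, but it needs a step you did not anticipate. The relevant input is the quintic identity, not \eqref{eq.f1}, which produces $f_{1/5}$ rather than $f_1$. In the paper's normalization, Ramanujan's identity reads $\frac{1}{qR(q)^5}-11-qR(q)^5=\frac{f_1^6}{qf_5^6}$. Combining it and its $q\to q^2$ analogue with the formulas for $qx^5$ and $q^2y^5$ above gives
\[
\frac{f_1^6}{qf_5^6}=\frac{(1+4z-z^2)(z^2+z-1)^2}{z^2(1-z^2)},\qquad
\frac{f_2^6}{q^2f_{10}^6}=\frac{(1+4z-z^2)^2(z^2+z-1)}{z(1-z^2)^2}.
\]
Multiplying these yields
\[
\Bigl(q^{-1}\frac{f_1^2f_2^2}{f_5^2f_{10}^2}\Bigr)^3=\Bigl(\frac{(1+4z-z^2)(z^2+z-1)}{z(1-z^2)}\Bigr)^3=\Bigl(K-3-\frac4K\Bigr)^3 .
\]
This gives the \emph{cube} of the identity. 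You must then take cube roots in $\mathbb{C}((q))$. That is legitimate because both bases begin with $q^{-1}$, so the cube root of unity is forced to be $1$.

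The paper itself gives no argument for this lemma; it simply quotes the identity from Baruah and Begum.
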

\begin{proof}
Identity~\eqref{eq.k} follows from \cite[Eqs.~(1.7) and~(2.2)]{Baruah-Begum}.
\end{proof}

We also require the parameterization of eta quotients due to Alaca, Alaca, and Williams \cite{AAW2006}. Let
\[
\varphi(q):=1+2\sum_{n\ge1}q^{n^{2}}
\]
denote the classical theta function of Ramanujan, and set
\begin{equation}\label{eq.stw}
s:=\frac{\varphi^{3}(q^{3})}{\varphi(q)},\qquad
t:=\frac{\varphi^{2}(q)-\varphi^{2}(q^{3})}{4q\,\varphi^{2}(q^{3})} .
\end{equation}

\begin{lemma}[{\cite{AAW2006}}]\label{lem.f1236}
We have
\begin{align}
\label{eq.aaw1}f_1&=s^{1/2}\,t^{1/24}\,(1-2qt)^{1/2}\,(1+qt)^{1/8}\,(1+2qt)^{1/6}\,(1+4qt)^{1/8},\\
\label{eq.aaw2}f_2&=s^{1/2}\,t^{1/12}\,(1-2qt)^{1/4}\,(1+qt)^{1/4}\,(1+2qt)^{1/12}\,(1+4qt)^{1/4},\\
\label{eq.aaw3}f_3&=s^{1/2}\,t^{1/8}\,(1-2qt)^{1/6}\,(1+qt)^{1/24}\,(1+2qt)^{1/2}\,(1+4qt)^{1/24},\\
\label{eq.aaw6}f_6&=s^{1/2}\,t^{1/4}\,(1-2qt)^{1/12}\,(1+qt)^{1/12}\,(1+2qt)^{1/4}\,(1+4qt)^{1/12}.
\end{align}
\end{lemma}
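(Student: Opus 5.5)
The statement is the Alaca--Alaca--Williams parameterization, which we take from \cite{AAW2006}. If one wanted a self-contained argument, the plan is to turn each factor $1-2qt$, $1+qt$, $1+2qt$, $1+4qt$, together with $s$ and $t$, into an explicit eta quotient in $f_1,f_2,f_3,f_4,f_6,f_{12}$. Solving the resulting multiplicative linear system for $f_1,f_2,f_3,f_6$ then gives the four formulas.

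\emph{Step 1: express $s$ and $t$ through eta quotients.} Use the classical product $\varphi(q)=f_2^5/(f_1^2f_4^2)$, so that $\varphi(q^3)=f_6^5/(f_3^2f_{12}^2)$. By \eqref{eq.stw},
\[
1+4qt=\frac{\varphi^2(q)}{\varphi^2(q^3)},
\]
which is already an eta quotient. Likewise $s$ is an eta quotient.

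\emph{Step 2: the remaining factors.} For the other three factors we use
\[
1+qt=\frac{\varphi^2(q)+3\varphi^2(q^3)}{4\varphi^2(q^3)},\qquad
1+2qt=\frac{\varphi^2(q)+\varphi^2(q^3)}{2\varphi^2(q^3)},\qquad
1-2qt=\frac{3\varphi^2(q^3)-\varphi^2(q)}{2\varphi^2(q^3)}.
\]
So one must establish product formulas for $\varphi^2(q)+3\varphi^2(q^3)$, $\varphi^2(q)+\varphi^2(q^3)$ and $3\varphi^2(q^3)-\varphi^2(q)$. These are classical level-$12$ theta identities; I expect, for instance, $3\varphi^2(q^3)-\varphi^2(q)=2\varphi^2(-q^3)\,\varphi(-q)/\varphi(-q^3)$ up to the precise eta form. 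I would derive them elementarily as follows:
\begin{itemize}
\item start from the Borweins' cubic theta function $a(q)=\varphi(q)\varphi(q^3)+4q\psi(q^2)\psi(q^6)$, together with its companions $b(q)=f_1^3/f_3$ and $c(q)=3q^{1/3}f_3^3/f_1$;
\item use the $2$-dissections \eqref{eq.f13f3} and \eqref{eq.f33f1}, i.e.\ the identities for $b$ and $c$ under $q\mapsto q^2$ and $q\mapsto -q$, to relate $a(q)$ and $a(q^2)$ to $\varphi(q)\varphi(q^3)$ and to $\varphi^2(q)\pm c\,\varphi^2(q^3)$;
\item finally apply the cubic relation $a^3=b^3+c^3$ where needed.
\end{itemize}
An alternative, if the algebra becomes unwieldy, is to check each identity as an equality of eta quotients on $\Gamma_0(12)$ via Sturm's bound. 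That, however, would abandon the elementary spirit of the paper.

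\emph{Step 3: solve the linear system.} Once all six quantities $s,t,1-2qt,1+qt,1+2qt,1+4qt$ are eta quotients, take logarithms. This gives a $6\times 6$ linear system for the exponent vectors of $f_1,f_2,f_3,f_4,f_6,f_{12}$. Inverting it yields the exponents $\tfrac12,\tfrac1{24},\dots$ appearing in \eqref{eq.aaw1}--\eqref{eq.aaw6}. Checking these formulas is then just a matter of substituting the Step 1 and Step 2 expressions into each right-hand side and verifying that the product collapses to $f_k$, which is pure exponent bookkeeping. The branches of the fractional powers are fixed by comparing the leading coefficients at $q=0$, where $t\to1$ after extracting $q$-powers.

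The main obstacle is Step 2: finding and proving the product formulas for the three linear combinations of $\varphi^2(q)$ and $\varphi^2(q^3)$. I would attack these first with the cubic-theta-function identities and the $2$-dissections already collected in Section \ref{sec.prelim}.
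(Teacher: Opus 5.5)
Citing \cite{AAW2006} is exactly what the paper does. It gives no proof of this lemma, so at that level your proposal matches it. Your optional self-contained route is a sound strategy: write all six parameters as eta quotients in $f_1,f_2,f_3,f_4,f_6,f_{12}$, then do exponent bookkeeping. As written, however, it has a hole and a false example.

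\textbf{The hole is $t$ itself.} Step 1 promises an eta quotient for $t$ but only delivers $s$ and $1+4qt$. Step 2 lists only three linear combinations of $\varphi^2(q)$ and $\varphi^2(q^3)$. Every one of the four target formulas contains a power of $t$, so the bookkeeping in Step 3 cannot start without a fourth identity, for $\varphi^2(q)-\varphi^2(q^3)$.

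\textbf{The false example.} Your sample identity $3\varphi^2(q^3)-\varphi^2(q)=2\varphi^2(-q^3)\varphi(-q)/\varphi(-q^3)=2\varphi(-q)\varphi(-q^3)$ is wrong. The coefficients of $q^2$ on the two sides are $-4$ and $0$. The correct version has $\varphi(-q^2)\varphi(-q^6)$ in place of $\varphi^2(-q^3)$.

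\textbf{What is actually needed.} The four identities are
\[
\varphi^2(q)-\varphi^2(q^3)=4q\frac{f_2^3f_6f_{12}^2}{f_1f_3f_4^2},\qquad
\varphi^2(q)+\varphi^2(q^3)=2\frac{f_2^3f_3^2f_6}{f_1^2f_4f_{12}},
\]
\[
3\varphi^2(q^3)-\varphi^2(q)=2\frac{f_1^2f_2f_6^3}{f_3^2f_4f_{12}},\qquad
\varphi^2(q)+3\varphi^2(q^3)=4\frac{f_2f_4^2f_6^3}{f_1f_3f_{12}^2}.
\]
Together with $\varphi(q)=f_2^5/(f_1^2f_4^2)$, these give
\begin{itemize}
\item $s=f_1^2f_4^2f_6^{15}/(f_2^5f_3^6f_{12}^6)$,
\item $t=f_2^3f_3^3f_{12}^6/(f_1f_4^2f_6^9)$,
\item $1-2qt=f_1^2f_2f_3^2f_{12}^3/(f_4f_6^7)$,
\item $1+qt=f_2f_3^3f_4^2f_{12}^2/(f_1f_6^7)$,
\item $1+2qt=f_2^3f_3^6f_{12}^3/(f_1^2f_4f_6^9)$,
\item $1+4qt=f_2^{10}f_3^4f_{12}^4/(f_1^4f_4^4f_6^{10})$.
\end{itemize}
With these, your Step 3 bookkeeping does collapse each right-hand side to $f_1,f_2,f_3,f_6$ respectively. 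All six parameters have constant term $1$, including $t=1+q-q^2+\cdots$. So no $q$-power extraction is needed, and the branch choice is automatic.

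What your sketch still leaves unproved is an elementary derivation of these four level-$12$ identities, which is where the real content lies. Your cubic-theta bullet points are only a heuristic for that step.
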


{Lastly, as we close this section, we note the following lemma which follows from the Binomial Theorem.}
\begin{lemma}
    For a prime $p$ and positive integers $k$ and $l$,
    \begin{equation}\label{bt}
        f_l^{p^k}\equiv f_{lp}^{p^{k-1}}\pmod{p^k}.
    \end{equation}
\end{lemma}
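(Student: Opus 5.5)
The plan is to prove \eqref{bt} by induction on $k$, working throughout in the ring $\mathbb{Z}[[q]]$ of formal power series with integer coefficients. A congruence $A\equiv B \pmod{m}$ for power series means that every coefficient of $A-B$ is divisible by $m$.

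\textbf{Base case $k=1$.} Because $p$ divides $\binom{p}{j}$ for $0<j<p$, and by Fermat $a^p\equiv a \pmod p$ for integer coefficients $a$, the Frobenius map is additive modulo $p$. Hence for any $F\in\mathbb{Z}[[q]]$ we have $F(q)^p\equiv F(q^p)\pmod p$. The cleanest way to apply this is factor by factor: for each $i\ge1$,
\[
(1-q^{li})^p\equiv 1-q^{lip}\pmod p .
\]
Taking the product over $i\ge 1$ gives $f_l^p\equiv f_{lp}\pmod p$. This step is legitimate because each fixed coefficient of the infinite product depends on only finitely many factors, so the congruence passes to the limit.

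\textbf{Lifting lemma.} Next I will use the following fact: if $A,B\in\mathbb{Z}[[q]]$ and $A\equiv B\pmod{p^j}$ with $j\ge1$, then $A^p\equiv B^p\pmod{p^{j+1}}$. To see this, write $A=B+p^jC$ and expand by the Binomial Theorem:
\[
A^p=B^p+\binom{p}{1}B^{p-1}p^jC+\sum_{i\ge2}\binom{p}{i}B^{p-i}p^{ij}C^i .
\]
The $i=1$ term is divisible by $p^{j+1}$. Each term with $i\ge2$ is divisible by $p^{2j}$, which is at least $p^{j+1}$ since $j\ge1$.

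\textbf{Induction step.} Assume $f_l^{p^k}\equiv f_{lp}^{p^{k-1}}\pmod{p^k}$. Applying the lifting lemma with $A=f_l^{p^k}$ and $B=f_{lp}^{p^{k-1}}$ gives $f_l^{p^{k+1}}\equiv f_{lp}^{p^{k}}\pmod{p^{k+1}}$, which is the statement for $k+1$.

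There is no real obstacle in this argument. The only points needing care are that congruences of infinite products may be taken coefficientwise, and that the lifting step requires $j\ge1$, which the base case supplies.
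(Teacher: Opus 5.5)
Your proof is correct. It is essentially the argument the paper intends when it says the lemma "follows from the Binomial Theorem" (the paper gives no further detail): the base case $f_l^p\equiv f_{lp}\pmod p$ combined with the binomial lifting $A\equiv B\pmod{p^j}\Rightarrow A^p\equiv B^p\pmod{p^{j+1}}$, and you correctly handle both $p=2$ and the coefficientwise passage to infinite products.
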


\section{Proof of Theorem \ref{thm.20n}}\label{sec.proof}

We begin by establishing the following generating function result.
\begin{theorem}\label{thm.main}
We have
\begin{equation}
\sum_{n\geq0}\tbar(5n)q^{n}
\equiv\frac{f_1f_2f_3^{3}}{f_6^{2}}\pmod5 .
\label{eq.main}
\end{equation}
\end{theorem}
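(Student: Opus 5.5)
We plan to $5$-dissect the generating function $f_3/(f_1f_2)$ using Lemma~\ref{lemma5-disse} and then reduce modulo $5$.

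\textbf{Step 1: substitute the dissections.} By \eqref{eq.f3} and \eqref{eq.recip} with $k=1,2$, we write
\[
\frac{f_3}{f_1f_2}=\frac{f_{75}f_{25}^{5}f_{50}^{5}}{f_5^{6}f_{10}^{6}}
\left(\frac1{R(q^{15})}-q^{3}-q^{6}R(q^{15})\right)\Phi\bigl(R(q^5),q\bigr)\Phi\bigl(R(q^{10}),q^2\bigr).
\]
After expansion, every term is a monomial $q^{j}R(q^5)^aR(q^{10})^bR(q^{15})^c$. We keep only the terms with $j\equiv0\pmod5$ and then replace $q^5$ by $q$. The prefactor becomes $f_{15}f_5^5f_{10}^5/(f_1^6f_2^6)$. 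The selected part is a Laurent polynomial in $R(q),R(q^2),R(q^3)$ and powers of $q$.

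\textbf{Step 2: organize the result.} Modulo $5$, the coefficients $5Q^4$ in $\Phi$ vanish, which trims the expansion considerably. The surviving monomials should pair up into the symmetric combinations $w(m,n,p)\pm w(m,n,p)^{-1}$. This pairing is what the definition of $G(m,n,p)$ in \eqref{eq.G} is designed to capture. Concretely, a monomial $q^{j}R^{a}R(q^2)^bR(q^3)^c$ with $j\equiv0\pmod 5$ corresponds to $w(m,n,p)$ through $p=-j/5$, $-2m-n=a$, $m-n-p=b$, $n-p=c$. We must check that these equations have integer solutions and that partners appear with the correct sign $(-1)^{m+n}$. So the selected part becomes $\sum c_{m,n,p}G(m,n,p)$ with integer coefficients, taken mod $5$.

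\textbf{Step 3: convert to eta quotients.} Using the recurrences \eqref{eq.recm}--\eqref{eq.recp}, the initial values \eqref{eq.init0}--\eqref{eq.init1}, and the reflection \eqref{eq.reflect}, we express each $G(m,n,p)$ as a polynomial in $K^{\pm1},S,T^{\pm1},U,V$. Identity \eqref{eq.k} should help compress combinations of $K$. Modulo $5$, we use \eqref{bt} to trade the prefactor: $f_5\equiv f_1^5$ and $f_{10}\equiv f_2^5$, so $f_5^5f_{10}^5/(f_1^6f_2^6)\equiv f_1^{19}f_2^{19}$, and similar simplifications occur.

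\textbf{Step 4: verify the final identity.} The target becomes an identity among eta quotients in $f_1,f_2,f_3,f_5,f_6,f_{10},f_{15},f_{30}$ modulo $5$. We raise fifth powers where needed to remove $f_5,f_{10},f_{15},f_{30}$ via \eqref{bt}. This leaves a congruence purely in $f_1,f_2,f_3,f_6$, which we prove by substituting the parametrization of Lemma~\ref{lem.f1236}. After clearing the fractional exponents, it reduces to a polynomial identity in $q t$ (times powers of $s,t$), checked mod $5$. The main obstacle is Steps 2--3: the bookkeeping of which $(m,n,p)$ occur and with what coefficients, and then collapsing the resulting combination of $K,S,T,U,V$ to the single product $f_1f_2f_3^3/f_6^2$. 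If a direct collapse is hard, we will try first to show that the residue factors as (prefactor)$\times$(a small combination) congruent mod $5$ to a product. We will verify this via the AAW parametrization, since everything there becomes rational in $s,t$.
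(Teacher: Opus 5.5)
Your outline follows the paper's route step for step: 5-dissection with $R$, extraction, pairing into $G(m,n,p)$, recurrences down to $K,S,T,U,V$, reduction via \eqref{bt}, and a final check with the Alaca--Alaca--Williams parametrization. However, Step~4 as you describe it does not close.

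\textbf{The gap in Step~4.} Once you replace $f_{5k}$ by $f_k^{5}$, the quantities no longer have a common weight. $K\equiv q^{-1}f_1^{24}f_2^{-24}$ and $T\equiv q^{-2}f_1^{24}f_3^{-24}$ become free of $s$. By contrast, $U$ and $V$ pick up $s^{-4}$ and $S$ picks up $s^{-12}$. The quantity you must match (the target $f_1f_2f_3^3/f_6^2$ divided by the reduced prefactor $f_1^{19}f_2^{19}f_3^{5}$) carries $s^{-20}$. So after substitution you face a sum of terms in $s^{0},s^{-4},\dots,s^{-20}$. This is not a formal identity in independent variables $s$ and $qt$, so a ``polynomial identity in $qt$ times powers of $s$'' cannot be verified coefficientwise.

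\textbf{The missing idea.} You need a congruence tying $s$ to $t$ modulo $5$:
\[
s^{-4}\equiv(1-2qt)^{6}(1+2qt)^{2}+qt\,(1+qt)^{3}(1+4qt)^{3}\pmod 5 .
\]
The paper obtains it from \eqref{eq.k}, which you only intended for tidying up $K$-combinations. Modulo $5$ that identity says $g\equiv K+2+K^{-1}=(1+K)^2/K$, where $g\equiv q^{-1}f_1^{-8}f_2^{-8}$. Inserting the parametrizations of $K$ and $g$ yields $s^{-8}\equiv[\,\cdots]^{2}$. Since $\mathbb{F}_5((q))$ is an integral domain, you may take a square root, with the sign fixed by $s=1+O(q)$ and $qt=q+O(q^2)$. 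With this relation, and after first isolating the target in the form $K^{-2}T^{-1}P\equiv Ug^{2}$, both sides become honest polynomials in $qt$ and the check is finite.

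\textbf{A smaller slip in Step~2.} Every extracted monomial $q^{j/5}R(q)^aR(q^2)^bR(q^3)^c$ satisfies $a+2b+3c=j-15$, whereas $w(m,n,p)$ has $a+2b+3c=-5p$. So your system with $p=-j/5$ has no solution. You must pull out the common factor $q^{3}$ and take $p=3-j/5$, which is why the paper writes the residue as $q^{3}\sum c_{m,n,p}G(m,n,p)$.
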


\begin{proof}
Using \eqref{eq.f3} and \eqref{eq.recip} with $k=1,2$ in \eqref{generating}, we get
\begin{align*}
    \sum_{n\ge0}\tbar(n)q^{n}= \frac{f_{25}^{\,5}f_{50}^{\,5}f_{75}}{f_5^{\,6}f_{10}^{\,6}}\,
\Phi\bigl(R(q^{5}),q\bigr)\,\Phi\bigl(R(q^{10}),q^{2}\bigr)
\left(\frac1{R(q^{15})}-q^{3}-q^{6}\,R(q^{15})\right).
\end{align*}
Extracting the terms in which the exponents of $q$ are congruent to $0$ modulo $5$ and then replacing $q^5$ by $q$ gives
\begin{equation*}
\sum_{n\geq0}\tbar(5n)q^n
=\frac{f_5^{\,5}f_{10}^{\,5}f_{15}}{f_1^{\,6}f_2^{\,6}}\,\Lambda ,
%\label{eq.exact-X}
\end{equation*}
where $\Lambda$ denotes the sum of the terms of the product
$$\Phi\bigl(R(q^{5}),q\bigr)\,\Phi\bigl(R(q^{10}),q^{2}\bigr)
\bigl(R(q^{15})^{-1}-q^{3}-q^{6}R(q^{15})\bigr)$$
whose $q$-exponent is divisible by $5$, with $q^{5}$ then replaced by $q$. Thus $\Lambda$
{can be expressed as} a Laurent polynomial in
%\begin{equation}
%\label{a-r}
%    \myalpha:=R(q), \qquad \qquad\mybeta:=R(q^{2}), \qquad \qquad\mygamma:=R(q^{3})
%\end{equation}
$R(q)$, $R(q^2)$, and $R(q^3)$
with coefficients in
$\mathbb Z[q]$. Carrying out this finite extraction and reducing every coefficient modulo $5$ simplifies
$\Lambda$ to
\begin{equation*}
\Lambda\equiv \frac{L_0}{\mygamma}+L_1+\mygamma\,L_2\pmod5,
%\label{eq.L-form}
\end{equation*}
where $L_0$, $L_1$, $L_2$ are the following Laurent polynomials in $\myalpha$ and $\mybeta$:
\begin{align*}
L_0={}&\myalpha^{-4}\mybeta^{-4}
+q\!\left(2\myalpha\mybeta^{-4}+3\myalpha^{-1}\mybeta^{-3}+2\myalpha^{-3}\mybeta^{-2}\right) \\
&+q^2\!\left(\myalpha^{4}\mybeta^{-3}+4\myalpha^{2}\mybeta^{-2}+2\myalpha^{-4}\mybeta\right)\\
&+q^3\!\left(4\myalpha\mybeta+\myalpha^{-1}\mybeta^{2}+4\myalpha^{-3}\mybeta^{3}\right) \\
&+q^4\!\left(2\myalpha^{4}\mybeta^{2}+3\myalpha^{2}\mybeta^{3}\right),\\[3pt]
L_1={}&q\!\left(3\myalpha^{-2}\mybeta^{-4}+4\myalpha^{-4}\mybeta^{-3}\right)\\
&+q^2\!\left(\myalpha^{3}\mybeta^{-4}+3\myalpha\mybeta^{-3}+4\myalpha^{-1}\mybeta^{-2}+2\myalpha^{-3}\mybeta^{-1}\right)\\
&+q^3\!\left(3\myalpha^{4}\mybeta^{-2}+4\myalpha^{2}\mybeta^{-1}+\myalpha^{-2}\mybeta+3\myalpha^{-4}\mybeta^{2}\right)\\
&+q^4\!\left(2\myalpha^{3}\mybeta+\myalpha\mybeta^{2}+3\myalpha^{-1}\mybeta^{3}+4\myalpha^{-3}\mybeta^{4}\right)\\
&+q^5\!\left(\myalpha^{4}\mybeta^{3}+3\myalpha^{2}\mybeta^{4}\right),\\[3pt]
L_2={}&q^2\!\left(3\myalpha^{-2}\mybeta^{-3}+3\myalpha^{-4}\mybeta^{-2}\right)\\
&+q^3\!\left(\myalpha^{3}\mybeta^{-3}+\myalpha\mybeta^{-2}+\myalpha^{-1}\mybeta^{-1}\right)\\
&+q^4\!\left(2\myalpha^{4}\mybeta^{-1}+\myalpha^{-2}\mybeta^{2}+\myalpha^{-4}\mybeta^{3}\right)\\
&+q^5\!\left(2\myalpha^{3}\mybeta^{2}+2\myalpha\mybeta^{3}+2\myalpha^{-1}\mybeta^{4}\right)\\
&+q^6\!\left(4\myalpha^{4}\mybeta^{4}\right).
\end{align*}
Thus, we obtain
\begin{equation}
\sum_{n\geq0}\tbar(5n)q^n
\equiv\frac{f_5^{5}f_{10}^{5}f_{15}}{f_1^{6}f_2^{6}}\,
\left(\frac{L_0}{\mygamma}+L_1+\mygamma L_2\right)\pmod5.
\label{eq.L-GF}
\end{equation}

We now pair the twelve terms of $L_0/\mygamma$ with the twelve terms of
$\mygamma L_2$, while the sixteen terms of $L_1$ (with coefficient $\mygamma^0$) form eight internal pairs. This pairing is done in a natural way with an eye towards rewriting our results in terms of $G(m,n,p)$ for specific values of $m$, $n$, and $p$. Collecting these twenty blocks modulo 5, we obtain
\begin{align}
\label{initial_pairings} 
\frac{L_0}{\mygamma}+L_1+\mygamma L_2 
& \equiv{}
\left(\frac{1}{\myalpha^{4}\mybeta^{4}\mygamma}-q^{6}\,\myalpha^{4}\mybeta^{4}\mygamma\right) \\
& \qquad\qquad +2q\left(\frac{\myalpha}{\mybeta^{4}\mygamma}+q^{4}\,\frac{\mybeta^{4}\mygamma}{\myalpha}\right) \notag\\
& \qquad\qquad  \vdots \notag\\
%+3q\left(\frac{1}{\myalpha\mybeta^{3}\mygamma}-q^{4}\,\myalpha\mybeta^{3}\mygamma\right)\notag\\
%&+2q\left(\frac{1}{\myalpha^{3}\mybeta^{2}\mygamma}+q^{4}\,\myalpha^{3}\mybeta^{2}\mygamma\right)
%+q^{2}\left(\frac{\myalpha^{4}}{\mybeta^{3}\mygamma}+q^{2}\,\frac{\mybeta^{3}\mygamma}{\myalpha^{4}}\right)
%+4q^{2}\left(\frac{\myalpha^{2}}{\mybeta^{2}\mygamma}-q^{2}\,\frac{\mybeta^{2}\mygamma}{\myalpha^{2}}\right)\notag\\
%&+2q^{2}\left(\frac{\mybeta}{\myalpha^{4}\mygamma}+q^{2}\,\frac{\myalpha^{4}\mygamma}{\mybeta}\right)
%+4q^{3}\left(\frac{\myalpha\mybeta}{\mygamma}-\frac{\mygamma}{\myalpha\mybeta}\right)
%+q^{3}\left(\frac{\mybeta^{2}}{\myalpha\mygamma}+\frac{\myalpha\mygamma}{\mybeta^{2}}\right)\notag\\
%&+4q^{3}\left(\frac{\mybeta^{3}}{\myalpha^{3}\mygamma}-\frac{\myalpha^{3}\mygamma}{\mybeta^{3}}\right)
%+3q^{2}\left(\frac{\mygamma}{\myalpha^{4}\mybeta^{2}}-q^{2}\,\frac{\myalpha^{4}\mybeta^{2}}{\mygamma}\right)
%+3q^{2}\left(\frac{\mygamma}{\myalpha^{2}\mybeta^{3}}+q^{2}\,\frac{\myalpha^{2}\mybeta^{3}}{\mygamma}\right)\notag\\
%&+3q\left(\frac{1}{\myalpha^{2}\mybeta^{4}}+q^{4}\,\myalpha^{2}\mybeta^{4}\right)
%+4q\left(\frac{1}{\myalpha^{4}\mybeta^{3}}-q^{4}\,\myalpha^{4}\mybeta^{3}\right)
%+q^{2}\left(\frac{\myalpha^{3}}{\mybeta^{4}}-q^{2}\,\frac{\mybeta^{4}}{\myalpha^{3}}\right)\notag\\
%&+3q^{2}\left(\frac{\myalpha}{\mybeta^{3}}+q^{2}\,\frac{\mybeta^{3}}{\myalpha}\right)
%+4q^{2}\left(\frac{1}{\myalpha\mybeta^{2}}-q^{2}\,\myalpha\mybeta^{2}\right)
%+2q^{2}\left(\frac{1}{\myalpha^{3}\mybeta}+q^{2}\,\myalpha^{3}\mybeta\right)\notag\\
& \qquad\qquad +3q^{3}\left(\frac{\myalpha^{4}}{\mybeta^{2}}+\frac{\mybeta^{2}}{\myalpha^{4}}\right) \notag\\
& \qquad\qquad +4q^{3}\left(\frac{\myalpha^{2}}{\mybeta}-\frac{\mybeta}{\myalpha^{2}}\right) \pmod5\notag\\
& \notag\\
\label{eq.paired-GF}& ={} q^{3}\,\bigl[G(1,2,3)+2\,G(-1,1,2)+3\,G(0,1,2)+2\,G(1,1,2)\\
&\quad+G(-2,0,1)+4\,G(-1,0,1)+2\,G(2,0,1)+4\,G(0,-1,0)\notag\\
&\quad+G(1,-1,0)+4\,G(2,-1,0)+3\,G(1,2,1)+3\,G(0,2,1)\notag\\
&\quad+3\,G(0,2,2)+4\,G(1,2,2)+G(-2,1,1)+3\,G(-1,1,1)\notag\\
&\quad+4\,G(0,1,1)+2\,G(1,1,1)+3\,G(-2,0,0)+4\,G(-1,0,0)\bigr],\notag
\end{align}
where the last equality follows from recognizing that each such pairing in \eqref{initial_pairings} can be rewritten thanks to \eqref{eq.G}. We next wish to simplify \eqref{eq.paired-GF} modulo 5. By finitely many applications of the recurrences \eqref{eq.recm}--\eqref{eq.recp} of
Lemma~\ref{thm.P1main}, together with the initial values \eqref{eq.init0}--\eqref{eq.init1} and the reflection property \eqref{eq.reflect}, we obtain the following equalities.
\begin{align}\label{eq.exactlist}
G(1,2,3)&=(U-1)(U-3)\left[\frac14\left(T-S+\frac9T\right)+\frac32
+(1-V)\left(K+2+\frac4K\right)\right]\notag\\
&\qquad{}-(U-2)\left[(1-V)\left(2+\frac9T\right)+\frac4K\right],\notag
\\
G(-1,1,2)&=(U-2)\left(K-2+\frac4K\right)+\frac{4(1-V)}{K}-2-\frac9T,\notag
\\
G(0,1,2)&=K(U-2)+V-1,\notag
\\
G(1,1,2)&=(U-2)\left(K+2+\frac4K\right)-2-\frac9T,\notag
\\
G(-2,0,1)&=(U-2)\left(1+\frac{16}{K^{2}}\right)-\frac1K\left(T-S+\frac9T\right)-\frac6K,\notag
\\
G(-1,0,1)&=\frac14\left(T-S+\frac9T\right)+\frac32-\frac{4(U-2)}{K},\notag
\\
G(2,0,1)&=(U-2)+\frac1K\left(T-S+\frac9T\right)+\frac6K,\notag
\\
G(0,-1,0)&=V-1,\notag
\\
G(1,-1,0)&=2+\frac9T-\frac{4(1-V)}{K},
\\
G(2,-1,0)&=-(1-V)\left(1+\frac{16}{K^{2}}\right)+\frac8K+\frac{36}{KT},\notag
\\[2pt]
G(1,2,1)&=\frac14\left(T-S+\frac9T\right)+\frac32+(1-V)\left(K+2+\frac4K\right),\notag
\\
G(0,2,1)&=(1-V)K+U-2,\notag
\\[2pt]
G(0,2,2)&=(1-V)K(U-2)-(1-V)^{2}+(U-2)^{2}-2,\notag
\\[2pt]
G(1,2,2)&=(U-2)\left[\frac14\left(T-S+\frac9T\right)+\frac32\right]\notag\\
&\qquad{}+(1-V)\left[(U-2)\left(K+2+\frac4K\right)-2-\frac9T\right]-\frac4K,\notag
\\
G(-2,1,1)&=K-4+\frac8K-\frac{16}{K^{2}},\notag\\
G(-1,1,1)&=K-2+\frac4K,\notag
\\
G(0,1,1)&=K,\notag\\
G(1,1,1)&=K+2+\frac4K,\notag
\\
G(-2,0,0)&=2+\frac{16}{K^{2}},\notag\\
G(-1,0,0)&=-\frac4K.\notag
\end{align}

We now reduce the twenty evaluations in \eqref{eq.exactlist} modulo $5$. Setting
\begin{equation}
A:=S+T^{-1}+4T,
\label{eq.AB}
\end{equation} we see that, modulo 5,
\begin{equation}\label{eq.exprA}
	\frac14\left(T-S+\frac9T\right)\equiv4T+S+T^{-1}=A, \quad \text{equivalently}\quad
	T-S+\frac9T\equiv4A.
\end{equation}
Using \eqref{eq.AB} and \eqref{eq.exprA}, the twenty equations in \eqref{eq.exactlist} reduce modulo $5$ to
\begin{align}\label{eq.exactlist_2}
G(1,2,3)&\equiv (U-1)(U-3)\left[A+4+(1-V)\left(K+2+\frac4K\right)\right]\notag\\
&\qquad{}+4(U-2)\left[(1-V)\left(2+\frac4T\right)+\frac4K\right],\notag\\
G(-1,1,2)&\equiv (U-2)\left(K+3+\frac4K\right)+\frac{4(1-V)}{K}+3+\frac1T,\notag\\
G(0,1,2)&\equiv K(U-2)+4(1-V),\notag\\
G(1,1,2)&\equiv (U-2)\left(K+2+\frac4K\right)+3+\frac1T,\notag\\
G(-2,0,1)&\equiv (U-2)\left(1+\frac1{K^{2}}\right)+\frac{A}{K}+\frac4K,\notag\\
G(-1,0,1)&\equiv A+4+\frac{U-2}{K},\notag\\
G(2,0,1)&\equiv (U-2)+\frac{4A}{K}+\frac1K,\notag\\
G(0,-1,0)&\equiv 4(1-V),\notag\\
G(1,-1,0)&\equiv 2+\frac4T+\frac{1-V}{K},\\
G(2,-1,0)&\equiv 4(1-V)\left(1+\frac1{K^{2}}\right)+\frac3K+\frac1{KT},\notag\\
G(1,2,1)&\equiv A+4+(1-V)\left(K+2+\frac4K\right),\notag\\
G(0,2,1)&\equiv (1-V)K+U-2,\notag\\
G(0,2,2)&\equiv (1-V)K(U-2)+4(1-V)^{2}+(U-2)^{2}+3,\notag\\
G(1,2,2)&\equiv (U-2)(A+4)+(1-V)\left[(U-2)\left(K+2+\frac4K\right)+3+\frac1T\right]+\frac1K,\notag
\\
G(-2,1,1)&\equiv K^{-2}\bigl(K^3+K^2+3K+4\bigr),\notag\\
G(-1,1,1)&\equiv K^{-1}\bigl(K^2+3K+4\bigr),\notag\\
G(0,1,1)&\equiv K,\notag\\
G(1,1,1)&\equiv K^{-1}(K^2+2K+4),\notag
\\
G(-2,0,0)&\equiv K^{-2}\bigl(2K^2+1\bigr),\notag\\
G(-1,0,0)&\equiv K^{-1} \pmod 5.\notag
\end{align}
Substituting the twenty congruences in \eqref{eq.exactlist_2}
into \eqref{eq.paired-GF} and collecting, we obtain
$$\frac{L_0}{\mygamma}+L_1+\mygamma L_2
\equiv q^3K^{-2}T^{-1}P(K,T,A,U,V)\pmod5,$$
and hence, from \eqref{eq.L-GF},
\begin{equation}
\sum_{n\geq0}\tbar(5n)q^n
\equiv\frac{f_5^{5}f_{10}^{5}f_{15}}{f_1^{6}f_2^{6}}q^3K^{-2}T^{-1}P(K,T,A,U,V)\pmod5,
\label{eq.P}
\end{equation}
where
\begin{align}\label{eq.P2}
P(K,T,A,U,V)={}&KT\bigl[KA+(K^{2}+2K+4)(1-V)+2K\bigr](U-2)^{2}\\
\notag&+\bigl[4K^{2}TA+K(2K^{2}T+KT+K+T)(1-V)\\
\notag&\qquad\qquad{}+2K^{3}T+2K^{2}T+KT+T\bigr](U-2)\\
\notag&+KT(K+4)A+2K^{2}T(1-V)^{2}+(4K^{2}+2KT+T)(1-V)\\
\notag&+2K^{2}T+3K^{2}+4KT+4K+2T .
\end{align}

Here and below we write
\begin{equation}\label{eq.g-def}
    g:=K-3-\frac4K =q^{-1}\dfrac{f_1^{2}f_2^{2}}{f_5^{2}f_{10}^{2}},
\end{equation}
where the last equality is from \eqref{eq.k}. 

\begin{lemma}\label{lem.param}
We have the following congruences modulo $5$:
\begin{align}
\label{eq.pK} K&\equiv\frac{(1-2qt)^{6}(1+2qt)^{2}}{qt\,(1+qt)^{3}(1+4qt)^{3}},\\
\label{eq.pT} T&\equiv\frac{(1-2qt)^{8}(1+qt)^{2}(1+4qt)^{2}}{q^{2}t^{2}\,(1+2qt)^{8}},\\
\label{eq.pg} g&\equiv\frac{1}{s^{8}\,qt\,(1-2qt)^{6}(1+qt)^{3}(1+2qt)^{2}(1+4qt)^{3}},\\
\label{eq.pU} U&\equiv\frac{1}{s^{4}\,qt\,(1-2qt)^{2}(1+qt)(1+2qt)^{2}(1+4qt)},\\
\label{eq.pV} V&\equiv\frac{1}{s^{4}\,(1-2qt)^{2}(1+qt)^{2}(1+2qt)^{2}(1+4qt)^{2}},\\
\label{eq.pS} S&\equiv\frac{1}{s^{12}\,q^{2}t^{2}\,(1-2qt)^{8}(1+qt)^{2}(1+2qt)^{8}(1+4qt)^{2}}.
\end{align}
\end{lemma}

\begin{proof}
By \eqref{bt}, we have
\begin{align}
\label{eq.e1} K&\equiv q^{-1}\frac{f_1^{24}}{f_2^{24}}, &
T&\equiv q^{-2}\frac{f_1^{24}}{f_3^{24}}, &
S&\equiv \frac{q^{-2}}{f_1^{12}f_3^{12}},\\
\label{eq.e2} U&\equiv \frac{q^{-1}}{f_1^{2}f_2^{2}f_3^{2}f_6^{2}}, &
V&\equiv \frac{f_1^{2}f_6^{6}}{f_2^{10}f_3^{6}}, &
g&\equiv \frac{q^{-1}}{f_1^{8}f_2^{8}}\pmod 5 .
\end{align}
Using $f_1, f_2, f_3,$ and $f_6$ from Lemma \ref{lem.f1236} completes the proof. 
\end{proof}

\begin{lemma}\label{lem.collapse}
We have
\begin{equation}
K^{-2}T^{-1}P(K,T,A,U,V)\equiv U\, g^{2} \pmod 5.
\label{eq.collapse}
\end{equation}
\end{lemma}

\begin{proof}
We first establish the following relation between $s$ and $t$:
\begin{equation}\label{eq.st}
\frac1{s^{4}}\equiv(1-2qt)^{6}(1+2qt)^{2}+qt\,(1+qt)^{3}(1+4qt)^{3}\pmod 5 .
\end{equation}
Indeed, by \eqref{eq.g-def} we have
\begin{equation}\label{eq.gK}
	g\equiv K+2+\frac1K=\frac{(1+K)^{2}}{K}\pmod 5.
\end{equation}
On the other hand, \eqref{eq.pK} gives
\[
1+K\equiv\frac{(1-2qt)^{6}(1+2qt)^{2}+qt\,(1+qt)^{3}(1+4qt)^{3}}{qt\,(1+qt)^{3}(1+4qt)^{3}}\pmod 5.
\]
Squaring the last congruence, dividing by \eqref{eq.pK}, and then using \eqref{eq.gK} together with \eqref{eq.pg} yields
\[
\left(\frac1{s^{4}}\right)^2\equiv\Bigl[(1-2qt)^{6}(1+2qt)^{2}+qt\,(1+qt)^{3}(1+4qt)^{3}\Bigr]^{2}\pmod 5.
\]
As $\mathbb{F}_5((q))$ is an integral domain, we get
\begin{equation}\label{eq.s4}
	\frac1{s^{4}}\equiv\pm\left((1-2qt)^{6}(1+2qt)^{2}+qt\,(1+qt)^{3}(1+4qt)^{3}\right)\pmod 5.
\end{equation}
Since $s=1+O(q)$ and $qt=q+O(q^{2})$, we have
\begin{align*}
	\frac1{s^{4}}=1+O(q)\qquad\text{and}\qquad
	(1-2qt)^{6}(1+2qt)^{2}+qt\,(1+qt)^{3}(1+4qt)^{3}=1+O(q).
\end{align*}
This implies that the sign in \eqref{eq.s4} is positive, which proves \eqref{eq.st}.  

We now prove \eqref{eq.collapse}.  The congruences \eqref{eq.pK}, \eqref{eq.pT}, \eqref{eq.pg}, and \eqref{eq.pU}  give
\[
K^{2}T\,U\,g^{2}\equiv\frac{(1-2qt)^{6}}{s^{20}\,q^{7}t^{7}\,(1+qt)^{11}(1+2qt)^{10}(1+4qt)^{11}}\pmod 5,
\]
so that \eqref{eq.collapse} is equivalent to
\[
P(K,T,A,U,V)\equiv\frac{(1-2qt)^{6}}{s^{20}\,q^{7}t^{7}\,(1+qt)^{11}(1+2qt)^{10}(1+4qt)^{11}}\pmod 5.
\]
Multiplying through by $q^{8}t^{8}(1+qt)^{11}(1+2qt)^{16}(1+4qt)^{11}$, it suffices to show
\begin{equation}\label{eq.polyw}
q^{8}t^{8}\,(1+qt)^{11}(1+2qt)^{16}(1+4qt)^{11}\,P(K,T,A,U,V)
\equiv \frac{qt\,(1-2qt)^{6}(1+2qt)^{6}}{s^{20}}\pmod 5.
\end{equation}
Replacing $A$ by $S+T^{-1}+4T$ in \eqref{eq.P2} and then substituting the congruences of Lemma \ref{lem.param} together with \eqref{eq.st} simplifies the left--hand side of \eqref{eq.polyw} into a polynomial in $qt$ of degree $53$ modulo $5$. Meanwhile, by \eqref{eq.st}, the right--hand side of \eqref{eq.polyw} is congruent to a polynomial in $qt$ of degree $53$ modulo $5$. One readily verifies that every coefficient of the difference between the two sides is divisible by $5$.
\end{proof}

Having established all the necessary tools to complete the proof of Theorem \ref{thm.main}, we now resume the proof from \eqref{eq.P}, where we left off. 

Applying Lemma \ref{lem.collapse} in \eqref{eq.P}, we obtain
\begin{align*}
\sum_{n\geq0}\tbar(5n)q^n
\equiv \frac{f_5^{5}f_{10}^{5}f_{15}}{f_1^{6}f_2^{6}}q^3U g^2 \pmod 5.
\end{align*}
Substituting \eqref{V-def} for $U$ and \eqref{eq.g-def} for $g$ into the above congruence, we get
\begin{align*}
    \sum_{n\geq0}\tbar(5n)q^n
&\equiv \frac{f_2f_3^{3}f_5}{f_1^{4}f_6^{2}} \pmod 5\\
&=\frac{f_1f_2f_3^3}{f_6^2}\cdot\frac{f_5}{f_1^5}\\
&\equiv \frac{f_1f_2f_3^3}{f_6^2} \pmod 5 \quad \text{(thanks to \eqref{bt})}.
\end{align*}
This completes the proof of Theorem \ref{thm.main}.
\end{proof}
Next, we establish the generating functions for $\tbar(40n)$ and $\tbar{(40n+20)}$ modulo $5$.
\begin{proposition}\label{prop.40n}
We have
\begin{align}
\sum_{n\ge0}\tbar(40n)q^{n}
&\equiv\frac{f_1^{6}}{f_3^{2}}\cdot\frac{f_8f_{12}^{2}}{f_2^{2}f_4f_{24}}
-4q\,\frac1{f_1f_3}\cdot\frac{f_2^{2}f_4^{2}f_6^{2}f_{24}}{f_8f_{12}} \pmod 5
\label{eq.40n},\\
\sum_{n\ge0}\tbar(40n+20)q^{n}
&\equiv4\frac1{f_1f_3}\cdot\frac{f_2^{3}f_6f_8f_{12}^{2}}{f_4f_{24}}
-\frac{f_1^{6}}{f_3^{2}}\cdot\frac{f_4^{2}f_6f_{24}}{f_2^{3}f_8f_{12}}
\pmod5.
\label{eq.40n20}
\end{align}
\end{proposition}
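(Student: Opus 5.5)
The plan is to start from Theorem~\ref{thm.main} and perform three successive elementary 2-dissections, using only the lemmas of Section~\ref{sec.prelim} together with \eqref{bt}. No further Rogers--Ramanujan machinery should be needed. Write
\[
\sum_{n\ge0}\tbar(5n)q^n\equiv \frac{f_2}{f_6^2}\cdot\frac{f_3^3}{f_1}\cdot f_1^2 \pmod 5 .
\]
The factor $f_2/f_6^2$ is already even. I will 2-dissect $f_3^3/f_1$ by \eqref{eq.f33f1} and $f_1^2$ by \eqref{eq.f1sq}. Keeping only the even-exponent terms of the product and replacing $q^2$ by $q$, I expect
\[
\sum_{n\ge0}\tbar(10n)q^n\equiv \frac{f_2f_4^5}{f_6f_8^2}-2q\,\frac{f_6^3f_8^2}{f_2f_4}\cdot\frac{f_1^2}{f_3^2}\pmod 5 .
\]

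Second, I 2-dissect again. The first term is a function of $q^2$, so it contributes only to $\tbar(20n)$. In the second term I expand $f_1^2/f_3^2$ by \eqref{eq.f12f32}. Since that term carries an extra factor $q$, its even part comes from the odd component $-2q\,f_2^2f_8f_{12}f_{24}/(f_4f_6^4)$ of \eqref{eq.f12f32}. This gives
\[
\sum_{n\ge0}\tbar(20n)q^n\equiv \frac{f_1f_2^5}{f_3f_4^2}+4q\,\frac{f_1f_4^3f_6f_{12}}{f_2^2f_3}\pmod 5 .
\]
Both terms are an even eta quotient times $f_1/f_3$.

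Third, I 2-dissect $f_1/f_3$ by \eqref{eq.f1f3}. The even parts of the two terms, after $q^2\mapsto q$, give $\sum\tbar(40n)q^n$, and the odd parts give $\sum\tbar(40n+20)q^n$. Each side comes out as a sum of two explicit eta quotients in $f_1,\dots,f_{24}$.

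The main obstacle is the last step: matching these eta quotients to the stated forms, which are written with the factors $f_1^6/f_3^2$ and $1/(f_1f_3)$. The quotients produced by \eqref{eq.f1f3} will not literally have that shape, so I would massage them modulo~5. One tool is \eqref{bt} in the forms $f_k^5\equiv f_{5k}$ and $f_1^{5}\equiv f_5$, which trade fifth powers. The other is to rewrite pairs of terms as single products using the known identities in reverse, for instance \eqref{eq.1f1f3}, \eqref{eq.f16f32}, or the $\pm$ factorization \eqref{+}, \eqref{-}. If a direct match is elusive, I would instead 2-dissect the claimed right-hand sides of \eqref{eq.40n} and \eqref{eq.40n20} with \eqref{eq.f16f32} and \eqref{eq.1f1f3}. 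I would then compare component by component with what the third dissection produced, checking each coefficient modulo~5.
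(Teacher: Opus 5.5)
Your proposal is correct and is exactly the paper's proof: the same three 2-dissections using \eqref{eq.f33f1} and \eqref{eq.f1sq}, then \eqref{eq.f12f32}, then \eqref{eq.f1f3}, with the same intermediate congruences for $\sum\tbar(10n)q^n$ and $\sum\tbar(20n)q^n$. The obstacle you anticipate in the last step does not occur: after $q^2\mapsto q$, the four products coming from \eqref{eq.f1f3} are already the stated eta quotients (the factors $f_2^6/f_6^2$ and $1/(f_2f_6)$ become $f_1^6/f_3^2$ and $1/(f_1f_3)$), so no further manipulation modulo $5$ is needed.
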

\begin{proof}
From \eqref{eq.main}, we have 
\begin{align*}
    \sum_{n\ge0}\tbar(5n)q^{n}
&\equiv\frac{f_2}{f_6^{2}}\cdot\frac{f_3^{3}}{f_1}\cdot f_1^{2}\pmod 5\\
&=\frac{f_2}{f_6^{2}}\left(\frac{f_4^{3}f_6^{2}}{f_2^{2}f_{12}}+q\,\frac{f_{12}^{3}}{f_4}\right)\left(\frac{f_2f_8^{5}}{f_4^{2}f_{16}^{2}}-2q\,\frac{f_2f_{16}^{2}}{f_8}\right) \quad \text{(using \eqref{eq.f1sq} and \eqref{eq.f33f1})}.
\end{align*}
Extracting the terms with even exponents of $q$ and then replacing $q^2$ by $q$ gives
\begin{align*}
    \sum_{n\ge0}\tbar(10n)q^{n}&\equiv \frac{f_2f_4^{5}}{f_6f_8^{2}}
-2q\frac{f_1^{2}f_6^{3}f_8^{2}}{f_2f_3^{2}f_4}\pmod5\\
&=\frac{f_2f_4^{5}}{f_6f_8^{2}}-2q\frac{f_6^{3}f_8^{2}}{f_2f_4}\left(\frac{f_2f_4^{2}f_{12}^{4}}{f_6^{5}f_8f_{24}}
-2q\frac{f_2^{2}f_8f_{12}f_{24}}{f_4f_6^{4}}\right) \quad \text{(thanks to \eqref{eq.f12f32})}.
\end{align*}
Isolating the terms in which the exponents of $q$ are even and then replacing $q^2$ by $q$, we obtain
\begin{align*}
     \sum_{n\ge0}\tbar(20n)q^{n}&\equiv \frac{f_1}{f_3}\left(\frac{f_2^{5}}{f_4^{2}}+4q\frac{f_4^{3}f_6f_{12}}{f_2^{2}}\right)
\pmod5\\
&=\left(\frac{f_2f_{16}f_{24}^{2}}{f_6^{2}f_8f_{48}}
-q\frac{f_2f_8^{2}f_{12}f_{48}}{f_4f_6^{2}f_{16}f_{24}}\right) \left(\frac{f_2^{5}}{f_4^{2}}+4q\frac{f_4^{3}f_6f_{12}}{f_2^{2}}\right) \quad \text{(using \eqref{eq.f1f3})}.
\end{align*}
Now, extracting the terms with even exponents of $q$ and then replacing $q^2$ by $q$ yields \eqref{eq.40n}. Similarly, isolating the terms with odd exponents of $q$, dividing both sides by $q$, and then replacing $q^2$ by $q$ gives \eqref{eq.40n20}.
\end{proof}

\begin{proof}[Proof of Theorem \ref{thm.20n}]
Substituting \eqref{eq.1f1f3} and \eqref{eq.f16f32} into \eqref{eq.40n}, we obtain
\begin{align*}
    \sum_{n\ge0}\tbar(40n)q^{n}
&\equiv\left(\frac{f_2f_4f_6^{5}}{f_{12}^{3}}
+4q\frac{f_2^2f_4^2f_{12}^2}{f_6^2}\right)\frac{f_8f_{12}^{2}}{f_2^{2}f_4f_{24}}\\
&\quad -4q\left(\frac{f_8^{2}f_{12}^{5}}{f_2^{2}f_4f_6^{4}f_{24}^{2}}
+q\frac{f_4^{5}f_{24}^{2}}{f_2^{4}f_6^{2}f_8^{2}f_{12}}\right)\frac{f_2^{2}f_4^{2}f_6^{2}f_{24}}{f_8f_{12}} \pmod 5\\
&=\frac{f_6^{5}f_8}{f_2f_{12}f_{24}}
-4q^{2}\,\frac{f_4^{7}f_{24}^{3}}{f_2^{2}f_8^{3}f_{12}^{2}}\\
&\equiv Y_0(q^2) \pmod 5.
\end{align*}
Similarly, substituting \eqref{eq.1f1f3} and \eqref{eq.f16f32} into \eqref{eq.40n20}, we get
\begin{align*}
    \sum_{n\ge0}\tbar(40n+20)q^{n}
&\equiv4\left(\frac{f_8^{2}f_{12}^{5}}{f_2^{2}f_4f_6^{4}f_{24}^{2}}
+q\frac{f_4^{5}f_{24}^{2}}{f_2^{4}f_6^{2}f_8^{2}f_{12}}\right)\frac{f_2^{3}f_6f_8f_{12}^{2}}{f_4f_{24}}\\
&\quad -\left(\frac{f_2f_4f_6^{5}}{f_{12}^{3}}
+4q\frac{f_2^2f_4^2f_{12}^2}{f_6^2}\right)\frac{f_4^{2}f_6f_{24}}{f_2^{3}f_8f_{12}}
\pmod5\\
&=4\frac{f_2f_8^{3}f_{12}^{7}}{f_4^{2}f_6^{3}f_{24}^{3}}
-\frac{f_4^{3}f_6^{6}f_{24}}{f_2^{2}f_8f_{12}^{4}}\\
&\equiv Y_1(q^2) \pmod 5.
\end{align*}
Finally, 
\begin{align*}
\sum_{n\ge0}\tbar(20n)q^{n}
=\sum_{n\ge0}\tbar(40n)q^{2n}+q\sum_{n\ge0}\tbar(40n+20)q^{2n}
\equiv Y_0(q^{4})+qY_1(q^{4})\pmod5. 
\end{align*}
\end{proof}

\section{Closing Thoughts}\label{sec.ct}
Note that Theorem \ref{thm.main} can also be used to provide a new elementary proof of the following theorem, first proven by Chern and Hao \cite{CH2019} via modular forms and later proved in elementary fashion by Lin, Liu, Wang, and Xiao \cite{LLWX}.  

\begin{theorem}
\label{thm:45n30}
For all $n\geq 0$, $\tbar(45n+30) \equiv 0 \pmod{5}$.  
\end{theorem}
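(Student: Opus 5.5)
The plan is to start from Theorem~\ref{thm.main} and finish with two classical $3$-dissections. Since $\tbar(45n+30)=\tbar\bigl(5(9n+6)\bigr)$, it suffices to show that in
\[
\sum_{n\ge0}\tbar(5n)q^n\equiv \frac{f_1f_2f_3^{3}}{f_6^{2}}\pmod 5
\]
the coefficients of $q^{9n+6}$ all vanish modulo $5$. The factor $f_3^3/f_6^2$ is a series in $q^3$, so everything reduces to $3$-dissecting $f_1f_2$.

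\textbf{Step 1.} Use the well-known $3$-dissection
\[
f_1f_2=\frac{f_6f_9^{4}}{f_3f_{18}^{2}}-q\,f_9f_{18}-2q^{2}\frac{f_3f_{18}^{4}}{f_6f_9^{2}}.
\]
This is standard in the literature, for instance in Hirschhorn's book \cite{Hirschhorn}. It can be quoted from there, or derived from the Jacobi triple product applied to $f_1f_2$ written as a product of $\psi$/$\varphi$-type theta series.

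Multiply by $f_3^3/f_6^2$ and keep only the terms whose exponents are divisible by $3$. After replacing $q^3$ by $q$, this gives
\[
\sum_{n\ge0}\tbar(15n)q^n\equiv \frac{f_1^{2}f_3^{4}}{f_2f_6^{2}}\pmod 5 .
\]

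\textbf{Step 2.} It remains to show that the right-hand side has no terms $q^{3n+2}$. Here I would use $f_1^2/f_2=\varphi(-q)$ together with its classical $3$-dissection
\[
\varphi(-q)=\varphi(-q^{9})-2q\,\frac{f_3f_{18}^{2}}{f_6f_9}.
\]
This identity follows immediately by splitting $\sum_n(-1)^nq^{n^2}$ according to $n \bmod 3$, since $n^2\equiv 0$ or $1 \pmod 3$. So $f_1^2/f_2$ only has exponents $\equiv 0,1 \pmod 3$. Multiplying by $f_3^4/f_6^2$, which is a series in $q^3$, preserves this. Hence $\tbar(15(3n+2))=\tbar(45n+30)\equiv 0 \pmod 5$.

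\textbf{Main obstacle.} The only real input beyond Theorem~\ref{thm.main} is the $3$-dissection of $f_1f_2$ in Step~1, and this is where I expect the difficulty to lie. If it is not simply cited, I would prove it by writing $f_1f_2=\varphi(-q)\,\psi(q)\cdots$ in theta-product form, or by dissecting $f_1$ via Euler's pentagonal theorem and $f_2$ likewise and combining. The simplest route is to cite it as a known identity.

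Everything after Step~1 is exact, not merely modulo $5$: the congruence modulo $5$ enters only through Theorem~\ref{thm.main}.
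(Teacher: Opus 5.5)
Your proposal is correct and follows the paper's proof essentially step for step. You $3$-dissect $f_1f_2$ to obtain $\sum_{n\ge0}\tbar(15n)q^n\equiv f_1^2f_3^4/(f_2f_6^2)\pmod 5$, then use $f_1^2/f_2=\varphi(-q)$ (where only $n^2\not\equiv 2\pmod 3$ is actually needed) to rule out terms $q^{3n+2}$.
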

In order to prove Theorem \ref{thm:45n30}, we require two additional 3-dissection results. 

\begin{lemma}
\label{lemma:3-dissections}
We have 
\begin{align*}
f_1f_2 &= \frac{f_6f_9^4}{f_3f_{18}^2} - qf_9f_{18} -2q^2\frac{f_3f_{18}^4}{f_6f_9^2}, \\
\frac{f_1^2}{f_2} &= \frac{f_9^2}{f_{18}} - 2q\frac{f_3f_{18}^2}{f_6f_9}.
\end{align*}
\end{lemma}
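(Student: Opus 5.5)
The plan is to prove the second identity first, prove one auxiliary 3-dissection of the same kind, and then obtain the first identity by multiplying the two.

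\emph{The identity for $f_1^2/f_2$.} By Gauss, $f_1^2/f_2=\varphi(-q)=\sum_{n\in\mathbb{Z}}(-1)^n q^{n^2}$. I would split the sum according to $n\equiv 0$ or $\pm1 \pmod 3$.
\begin{itemize}
\item The terms with $n=3m$ give $\sum_m(-1)^m q^{9m^2}=\varphi(-q^9)=f_9^2/f_{18}$.
\item The terms with $n=3m+1$ and $n=3m-1$ contribute equally after $m\mapsto -m$. Their total is $-2q\sum_m(-1)^m q^{9m^2+6m}$.
\end{itemize}
By the Jacobi triple product, $\sum_m(-1)^m q^{9m^2+6m}=(q^3;q^{18})_\infty(q^{15};q^{18})_\infty(q^{18};q^{18})_\infty$. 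I would then rewrite this in terms of the $f_k$: I expect it to equal $f_3f_{18}^2/(f_6f_9)$. This holds because $(q^3;q^{18})_\infty(q^9;q^{18})_\infty(q^{15};q^{18})_\infty=(q^3;q^6)_\infty=f_3/f_6$ and $(q^9;q^{18})_\infty=f_9/f_{18}$. This proves the second identity.

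\emph{The auxiliary dissection of $\psi(q)$.} I would use $\psi(q)=f_2^2/f_1=\sum_{n\ge0}q^{n(n+1)/2}$ and record its standard 3-dissection
\[
\frac{f_2^2}{f_1}=\frac{f_6f_9^2}{f_3f_{18}}+q\frac{f_{18}^2}{f_9}.
\]
To prove it, I would write $\psi(q)=\sum_{n\in\mathbb{Z}}q^{2n^2-n}$ and split $n$ modulo $3$.
\begin{itemize}
\item The class $n\equiv 1 \pmod 3$ gives exponents $18m^2+9m+1$. So it contributes $q\,\psi(q^9)=q f_{18}^2/f_9$.
\item The classes $n\equiv 0$ and $n\equiv 2 \pmod 3$ combine into one two-sided series whose exponents are all multiples of $3$. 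By the triple product it becomes $(-q^3;q^9)_\infty(-q^6;q^9)_\infty(q^9;q^9)_\infty$.
\end{itemize}
Simplifying this product to $f_6f_9^2/(f_3f_{18})$ is routine: it uses $(-q^3;q^3)_\infty=f_6/f_3$ and $(-q^9;q^9)_\infty=f_{18}/f_9$.

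\emph{The identity for $f_1f_2$.} I would use $f_1f_2=\dfrac{f_1^2}{f_2}\cdot\dfrac{f_2^2}{f_1}$ and multiply the two dissections:
\[
\Bigl(\frac{f_9^2}{f_{18}}-2q\frac{f_3f_{18}^2}{f_6f_9}\Bigr)\Bigl(\frac{f_6f_9^2}{f_3f_{18}}+q\frac{f_{18}^2}{f_9}\Bigr).
\]
Collecting powers of $q$ gives the following terms.
\begin{itemize}
\item The constant term is $f_6f_9^4/(f_3f_{18}^2)$.
\item The $q$ term is $(1-2)f_9f_{18}$, that is $-qf_9f_{18}$.
\item The $q^2$ term is $-2q^2f_3f_{18}^4/(f_6f_9^2)$.
\end{itemize}
This is exactly the first identity.

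The only delicate point I anticipate is the bookkeeping in the triple product step for $\psi(q)$. There the two residue classes $n\equiv 0,2 \pmod 3$ must be merged into a single bilateral series with the correct quadratic exponent before the product formula is applied. If that merge proves awkward, I would simply cite the well-known 3-dissection of $\psi(q)$, due to Hirschhorn, instead of rederiving it.
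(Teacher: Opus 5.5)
Your argument is correct, but it takes a different route from the paper, which gives no derivation: it cites Chan for the dissection of $f_1f_2$ and Hirschhorn (14.3.2) for the dissection of $f_1^2/f_2$.

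You instead work from first principles:
\begin{itemize}
\item You derive the dissection of $\varphi(-q)=f_1^2/f_2$ from Gauss's series and the Jacobi triple product.
\item You derive the companion dissection $\frac{f_2^2}{f_1}=\frac{f_6f_9^2}{f_3f_{18}}+q\frac{f_{18}^2}{f_9}$ of $\psi(q)$ the same way.
\item You then obtain the first identity from $f_1f_2=\varphi(-q)\,\psi(q)$. Since all coefficient functions are series in $q^3$ and the product only reaches $q^2$, no regrouping is needed. This also explains the middle coefficient as $1-2=-1$.
\end{itemize}

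The step you flagged as delicate does go through. The classes $n=3m$ and $n=3m-1$ give exponents $3(6m^2-m)$ and $3(6m^2-5m+1)$. These are $3k(3k-1)/2$ for $k=2m$ and $k=1-2m$ respectively. Together they form $\sum_{k\in\mathbb{Z}}q^{3k(3k-1)/2}=(-q^3;q^9)_\infty(-q^6;q^9)_\infty(q^9;q^9)_\infty$, exactly as you claim.

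The paper's citations are shorter. Your version makes the lemma, and hence the proof of Theorem \ref{thm:45n30}, fully self-contained, using nothing beyond the triple product.
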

\begin{proof}
The first identity above appears in \cite{Chan}, while the second can be found in \cite[(14.3.2)]{Hirschhorn}.
\end{proof}
\begin{proof}[Proof of Theorem \ref{thm:45n30}]
Using Theorem \ref{thm.main} as our starting point, we have
\begin{align*}
\sum_{n\geq0}\tbar(5n)q^{n}
&\equiv
\frac{f_1f_2f_3^{3}}{f_6^{2}}\pmod5 \\
&= 
\frac{f_3^{3}}{f_6^{2}}(f_1f_2).
\end{align*} 
From Lemma \ref{lemma:3-dissections}, we then know 
\begin{align*}
\sum_{n\geq0}\tbar(15n)q^{3n}
&\equiv
\frac{f_3^{3}}{f_6^{2}}\left( \frac{f_6f_9^4}{f_3f_{18}^2} \right)  \pmod{5}
\end{align*} 
or 
\begin{align*}
\sum_{n\geq0}\tbar(15n)q^{n}
&\equiv
\frac{f_1^{2}f_3^4}{f_2f_6^{2}} \pmod{5} \\
&=  
\frac{f_3^4}{f_6^{2}}\left(  \frac{f_9^2}{f_{18}} - 2q\frac{f_3f_{18}^2}{f_6f_9} \right)
\end{align*} 
again from Lemma \ref{lemma:3-dissections}.  Note that, when expanded as a power series in $q$, the last expression above will contain no terms of the form $q^{3n+2}$.  Therefore, for all $n\geq 0$, 
$$
\tbar(15(3n+2)) = \tbar(45n+30) \equiv 0 \pmod{5}.
$$
\end{proof}

\bibliographystyle{abbrv}
%\nocite{*}
\bibliography{tbar.bib}

\end{document}